\newcommand{\loc}{\textnormal{loc}}
\newcommand{\medint}{-\kern  -,375cm\int}
\newenvironment{michelarev}{\color{red}}{\color{black}}
\newcommand{\bmicr}{\begin{michelarev}}
	\newcommand{\emicr}{\end{michelarev}}
\def\cA{\mathcal{A}}
\def\R{\mathbb{R}}
\def\XXint#1#2#3{{\setbox0=\hbox{$#1{#2#3}{\int}$}
\vcenter{\hbox{$#2#3$}}\kern-.5\wd0}}
\newtheorem{theorem}{Theorem}[section]
\newtheorem{lemma}[theorem]{Lemma}
\theoremstyle{definition}
\newtheorem{definition}[theorem]{Definition}
\newtheorem{remark}[theorem]{Remark}
\newbox\tr@tto
\def\medint{\displaystyle\copy\tr@tto\kern-10.4pt\int}
\numberwithin{equation}{section}
\def\R{{\mathbb R}}
\def\proofof#1{\begin{proof}[Proof of #1]}
\def\dx{{\mathrm d}x}
\def\loc{\rm loc}
\newbox\tr@tto
\def\medint{\displaystyle\copy\tr@tto\kern-10.4pt\int}
\newcommand{\LL}{\mathrm{L}}
\numberwithin{equation}{section}
\begin{document}
\title{Higher differentiability of solutions for a class of obstacle problems with variable exponents}
\author{Niccol\`o Foralli, Giovanni Giliberti}

\maketitle

\begin{abstract}
In this paper we prove a higher differentiability result for the solutions to a class of obstacle problems in the form
 \begin{equation*}
\label{obst-def0}
\min\left\{\int_\Omega F(x,Dw) dx : w\in \mathcal{K}_{\psi}(\Omega)\right\}
\end{equation*}
where $\psi\in W^{1,p(x)}(\Omega)$ is a fixed function called obstacle  and $\mathcal{K}_{\psi}=\{w \in W^{1,p(x)}_{0}(\Omega)+u_0: w \ge \psi \,\, \textnormal{a.e. in $\Omega$}\}$ is the class of the admissible functions, for a suitable boundary value $ u_0 $. We deal with a convex integrand $F$ which satisfies the $p(x)$-growth conditions
\begin{equation*}\label{growth}|\xi|^{p(x)}\le F(x,\xi)\le C(1+|\xi|^{p(x)}),\quad p(x)>1
\end{equation*}

\end{abstract}

\maketitle

\noindent
{\footnotesize {\bf AMS Classifications.}  49N15; 49N60; 49N99.}

\noindent
{\footnotesize {\bf Key words and phrases.}  Variational integrals; Variable exponents; Regularity of minimizers; Higher differentiability.}

\bigskip

\bigskip
\section{Introduction}
\bigskip

The aim of this paper is to study higher differentiability results for the solutions to the following class of obstacle problems
$$ \min\left\{\int_\Omega F(x,Dw)\,dx : w\in\mathcal{K}_\psi(\Omega)\right\} $$
where $ \psi\in W^{1,p(x)}(\Omega) $ is a fixed function, called obstacle and
$$ \mathcal{K}_\psi(\Omega)=\left\{w\in W^{1,p(x)}_0(\Omega)+u_0:w\ge \psi \text{ a.e. in } \Omega\right\} $$
is the class of admissible functions, for a suitable boundary value $ u_0 $.\\
Here $ \Omega $ is a bounded open set of $ \mathbb{R}^n $, $ n>2 $ and $ F:\Omega \times \mathbb{R}^n \rightarrow \mathbb{R}$ is a Carathéodory function fulfilling natural growth and convexity assumptions with variable exponent (namely assumptions ($\mathcal{A}1$)--($\mathcal{A}3$) below).\\
Higher diferentiability results have been attracting a lot of attention in the recent years, starting from the pioneering papers \cite{PdN14-1,PdN14-2,PdN15,G15,GPdN,KM}, to the more recent results concerning higher differentiability results for obstacle problems in the case of standard growth conditions \cite{EPdN18, EPdN20, Gr21} and $ p-q $ growth conditions of integer \cite{Gav1, Gav2, CDF20} and fractional order \cite{GrIp}, see also \cite{CDF20}, including the case of nearly linear growth \cite{Gav3} and the subquadratic growth case \cite{Ge21}.\\
In the same spirit of these results, assuming that the gradient of the obstacle belongs to a suitable Sobolev class, we are interested in finding conditions on the partial map 
$$ x \longmapsto \mathcal{A}(x,\xi):=D_\xi F(x,\xi) $$

in order to obtain that the extra differentiability property of the obstacle transfers to the gradient of the solution, possibly with no loss in the order of differentiability.\\
It is indeed well known, also for equations in divergence form, that no extra-differentiability properties for the solutions can be expected even if the right hand side is smooth, unless some assumptions are given on the $ x $-dependence on $ \mathcal{A} $.\\
Here the situation is much more complicated because we are considering the framework of variable exponent setting, a topic to which researchers devoted a lot of attention in the last decades from different viewpoints \cite{Di, Min06}.\\
Up to our knowledge, the only higher differentiability result available in this framework is the one in \cite{GPdN15}, in the case of unconstrained minimizers. In this case the assumptions on the partial map keep into account the weak differentiability of the variable exponent, more precisely the fact that its weak gradient belongs to some $ L \, \log^\beta L $ space, for some suitable $ \beta $.\\
Inspired by this result, we assume that the map $ x \longmapsto \mathcal{A}(x,\xi) $ is weakly differentiable in a suitable sense, see assumption ($\mathcal{A}4$) below, by means of a pointwise characterization of Sobolev spaces due to Haj\l asz, see \cite{H96}.\\
However our situation turns out to be even more complicated for two reasons:\\
1)	on one hand, differently from \cite{GPdN15}, we cannot deduce the higher differentiability estimates by simply relying on a class of approximating auxiliary problems and then passing the estimates to the limit, because in the constrained case we are not allowed to use an appropriate choice of the test functions and, along the recent Lipschitz regularity results (see for istance \cite{CEP}) we would need to use a linearization technique.
At this point we prefer to deal with the difference quotient method, so that assumption ($\mathcal{A}4$) turns to be the appropriate assumption (see for instance \cite{AM02} for the corresponding hypothesis in the case of H\"older continuity results);\\
2)	on the other hand, one of the main tools we need in order to perform our higher differentiability result is a Calderon-Zygmund regularity result, more precisely \cite{EH08}, see also \cite{BCO16, Mingione}.
This necessarily requires a more refined quantitative assumptions on the function $ p $ with respect to the log-Holder continuity introduced by Zhikov in \cite{Zhik}.
This assumption turns to be very natural for instance in the context of H\"older continuity results for either unconstrained and constrained problems \cite{AM01, AM02, Ele04, EH08}.
In our case, in view of recent embedding theorems in the context of Orlicz-Sobolev setting (see \cite{Cianchi}) we have that a $ W^{1,1} $ function $ k $ such that $ |Dk| \in L^n \log^{\sigma}L(\Omega) $ for some $ \sigma $ sufficiently large, will be actually a continuous function, with a modulus of continuity $ \omega $ such that 
$$ \lim_{ R \rightarrow 0} \omega(R) \log\left(\frac{1}{R}\right) = 0.  $$
Therefore the above mentioned Calderon-Zygmund regularity results apply in our situation.\\
We would like to stress that our assumption ($ \mathcal{A}4 $) turns to be very general and include not only the model case
$$ \mathcal{F}(u) = \int_\Omega|Du|^{p(x)} $$
 but also more general situations like 
 $$ \mathcal{G}(u) = \int_\Omega a(x)h(|Du|)^{p(x)}) $$ 
 recently considered (from the viewpoint of Lipschitz continuity) in \cite{EMM16}, a result which in turn has been very recently generalized in \cite{EPdN21}.\\
The paper therefore is organized as follows: Section 2 contains the notations and the statement of our main theorem; Section 3 includes some preliminary results concerning spaces with variable exponents, difference quotient and the above mentioned Calder\'on-Zygmund theorem; Section 4 is instead devoted to the proof of the main result.

\section{Notations and statement of the main result}

As mentioned in the introduction, we are going to prove a higher differentiability property of the gradient of the solutions $u\in W^{1,p(x)}(\Omega)$ to  variational obstacle problems of the form
\begin{equation}
\label{obst-def0}
\min\left\{\int_\Omega F(x,Dw): w\in \mathcal{K}_{\psi}(\Omega)\right\}
\end{equation}
where the integrand $F : \, \Omega \times \mathbb{R}^n \rightarrow \mathbb{R}  $ is a convex Carathéodory function of class $ C^1 $.\\
Here $ \Omega $ is a bounded open subset of $ \mathbb{R}^n $, $\psi:\,\Omega \rightarrow [- \infty, + \infty)$ is a fixed function called \textit{obstacle} which belongs to the Sobolev space $W^{1,p(x)}(\Omega)$.\\
The admissible functions class $\mathcal{K}_{\psi}(\Omega)$ is defined as
\begin{equation}
\label{classeA}
\mathcal{K}_{\psi}(\Omega) := \left \{w \in W^{1,p(x)}_{0}(\Omega)+u_0: w \ge \psi \,\, \textnormal{a.e. in $\Omega$} \right\}
\end{equation}
where $ u_0\in W^{1,p(x)}_{\loc}(\Omega) $ is a suitable boundary value.

By replacing $u_0$ by $\tilde{u}_0 = \max \{u_0, \psi\}$, we
may assume that the boundary value function $u_0$ satisfies $u_0\ge \psi$.

Therefore the set $\mathcal{K}_{\psi}$ is not empty.

We will deal with local solutions to our obstacle problem, in the following sense:
\begin{definition}\label{probdef}
	A mapping $u\in \mathcal{K}_\psi(\Omega)$ is a {\it local solution to the obstacle problem in $\mathcal{K}_{\psi}(\Omega)$}  if
	$F(x,Du) \in \LL^{1}_{\rm loc}(\Omega )$ and
	$$
	\int_{\text{supp}(u-\varphi)} \! F(x,Du) \, \dx \leq \int_{\text{supp}(u-\varphi)} \! F(x,D\varphi) \, \dx
	$$
	for  any $\varphi\in \mathcal{K}_{\psi}(\Omega)$.
\end{definition}

 We recall that    $u \in W^{1,p(x)}(\Omega)$ is a {\it local solution to the obstacle problem in $\mathcal{K}_{\psi}(\Omega)$}  in the spirit of Definition \ref{probdef} if and only if $u\in \mathcal{K}_\psi$ solves the following variational inequality
\begin{equation}
\label{obst-def}
\int_{\Omega} \langle \mathcal{A}(x, Du), D(\varphi - u) \rangle \, dx \ge 0,
\end{equation}
for all  $\varphi \in \mathcal{K}_{\psi}(\Omega)$ with supp$\varphi\Subset\Omega$,  where we set $$\mathcal{A}(x,\xi)=D_\xi F(x,\xi).$$
Therefore, for the sake of semplicity, from now on we will give our assumptions on $ \mathcal{A} $ instead of $ F $.\\
More precisely in the sequel we will assume that there exist a continuous function $p:\,\Omega \rightarrow (1, + \infty)$  with
\begin{equation}
	\label{boundp}
	1< \gamma_1 := \inf_{\Omega}p(x)\le p(x) \le \gamma_2 := \sup_{\Omega}p(x)<n<+\infty 
\end{equation}
for some $ \gamma_1, \gamma_2 $ and for every $ x\in\Omega $ and there exist positive constants $\nu, L$ and $\ell$ such that  the following $p(x)$-ellipticity and $p(x)$-growth conditions are satisfied for a.e. $x\in \Omega$ and every $\xi,\eta\in \mathbb{R}^n$:
$$ \langle \mathcal{A}(x, \xi) - \mathcal{A}(x, \eta), \xi - \eta \rangle  \ge \, \nu |\xi - \eta|^2 (1 + |\xi|^2 + |\eta|^2)^{\frac{p(x)-2}{2}}\eqno{(\cA 1)}$$
$$ |\mathcal{A}(x, \xi) - \mathcal{A}(x, \eta)| \le \, L \, |\xi - \eta| (1 + |\xi|^2 + |\eta|^2)^{\frac{p(x)-2}{2}} \eqno{(\cA 2)}$$
$$ |\mathcal{A}(x, \xi)| \le \, \ell \, (1 + |\xi|^2)^{\frac{p(x)-1}{2}} \eqno{(\cA 3)}$$

We also assume that there exists a non negative function $\kappa \in L^n\log^\sigma L(\Omega)$ with $ \sigma>2n-1$ such that $\mathcal{A}$ satisfies 
$$|\mathcal{A}(x,\xi)-\mathcal{A}(y,\xi)| \le (\kappa(x)+\kappa(y))|x-y|(1+|\xi|^2)^{\frac{p(x)-1}{2}}\log(e + |\xi|^2) \eqno{(\cA 4)}$$

\bigskip
See Section \ref{logspace} for more details about the spaces $ L^n\log^\sigma L(\Omega) $. 

Under these assumptions our higher differentiability result is the following

\begin{theorem}\label{teorema}
	Let $\mathcal{A}(x, \xi)$ satisfy $(\cA 1)$--$(\cA4)$, $ p:\Omega\rightarrow(1,+\infty) $ a continuous function satisfying \eqref{boundp} with $\gamma_1>2$.\\
	Let $u \in \mathcal{K}_{\psi}(\Omega)$ be the  solution to the obstacle problem \eqref{obst-def}. Then we have
	\begin{equation}
	\label{tesi2}
	D \psi \in W^{1,\gamma_2}_{\rm loc}(\Omega) \Rightarrow (1 + |D u|^2)^{\frac{\gamma_1-2}{4}} Du \in W^{1,2}_{\rm loc}(\Omega)
	\end{equation}
\end{theorem}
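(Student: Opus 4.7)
The plan is to combine a Caccioppoli-type estimate obtained via the difference quotient method with the a priori integrability of $|Du|^{p(x)}$ deduced from the Calder\'on--Zygmund theorem of \cite{EH08}. Fix a ball $B_R\Subset\Omega$, a cutoff $\eta\in C_c^\infty(B_R)$ with $\eta\equiv 1$ on $B_{R/2}$ and $|D\eta|\lesssim 1/R$, and, for $|h|$ sufficiently small, consider the two test functions
$$
\varphi_1 := u + \eta^2\Delta_h(u-\psi),\qquad \varphi_2 := u - \tau_{-h}\bigl[\eta^2\Delta_h(u-\psi)\bigr],
$$
where $\Delta_h f(x)=f(x+h)-f(x)$ and $\tau_{-h}g(x)=g(x-h)$. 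Writing $\varphi_i-\psi$ as a convex combination of $(u-\psi)(x)$ and $(u-\psi)(x\pm h)$ shows that both belong to $\mathcal{K}_\psi(\Omega)$. Inserting $\varphi_1$ and $\varphi_2$ into \eqref{obst-def}, performing a change of variables in the second one, and subtracting yields the master inequality
\begin{equation*}
\int_\Omega \bigl\langle \mathcal{A}(x+h,Du(x+h))-\mathcal{A}(x,Du(x)),\, D\bigl[\eta^2\Delta_h(u-\psi)\bigr]\bigr\rangle\,dx \le 0.
\end{equation*}

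Next, I would expand $D\bigl[\eta^2\Delta_h(u-\psi)\bigr] = \eta^2\Delta_h(Du-D\psi) + 2\eta\, D\eta\,\Delta_h(u-\psi)$ and split
$$
\Delta_h\mathcal{A}(\cdot,Du) = \underbrace{\mathcal{A}(x{+}h,Du(x{+}h))-\mathcal{A}(x{+}h,Du(x))}_{I} \;+\; \underbrace{\mathcal{A}(x{+}h,Du(x))-\mathcal{A}(x,Du(x))}_{II}.
$$
The pairing of $I$ with $\eta^2\Delta_h Du$ is bounded below, using $(\cA 1)$, by
$$
\nu\int_\Omega \eta^2\,|\Delta_h Du|^2 \bigl(1+|Du(x{+}h)|^2+|Du(x)|^2\bigr)^{\frac{p(x+h)-2}{2}}\,dx.
$$
After dividing by $|h|^2$ and passing to $h\to 0$ via the standard difference-quotient characterization of Sobolev spaces, this controls from below a quantity equivalent to $\int_{B_{R/2}} \bigl|D\bigl[(1+|Du|^2)^{(\gamma_1-2)/4}Du\bigr]\bigr|^2\,dx$, since $p(x)\ge\gamma_1>2$. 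All other contributions are moved to the right-hand side.

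The right-hand side splits into three groups: (a) obstacle contributions involving $\Delta_h D\psi$, for which $\|\Delta_h D\psi\|_{L^{\gamma_2}}/|h|$ is uniformly bounded by hypothesis; (b) cutoff contributions involving $\Delta_h(u-\psi)/|h|$, uniformly controlled in $L^{\gamma_2}$; and (c) the most delicate term, coming from $II$ and $(\cA 4)$, which after Young's inequality (absorbing the pieces containing $|\Delta_h Du|$ into the good term) reduces to the estimate of an integral of the form
$$
\int_\Omega \eta^2\,\kappa^2(x)\,(1+|Du|^2)^{p(x)-1}\log^2(e+|Du|^2)\,dx.
$$
This is where the main obstacle lies: one must reconcile the logarithmic loss produced by $(\cA 4)$ with the integrability of $|Du|^{p(x)}$. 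I would control this integral by a H\"older inequality in Orlicz spaces, pairing $\kappa^2\in L^{n/2}\log^{2\sigma}L$ with the sharp $L^s\log^{-\tau}L$-integrability of $(1+|Du|^2)^{p(x)-1}$ furnished by the Calder\'on--Zygmund estimate of \cite{EH08}, whose applicability is guaranteed by the improved modulus of continuity of $p(x)$ deduced from $\kappa\in L^n\log^\sigma L$ with $\sigma>2n-1$, as discussed in the introduction.

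Once all the error terms are uniformly bounded in $|h|$, the difference-quotient lemmas recalled in Section 3 yield $\eta\cdot(1+|Du|^2)^{(\gamma_1-2)/4}Du\in W^{1,2}(\Omega)$, and since $\eta\equiv 1$ on $B_{R/2}$ with $R$ and the base point arbitrary, the conclusion \eqref{tesi2} follows.
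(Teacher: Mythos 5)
Your plan follows the paper's structure closely (same two translated test functions, same master inequality, same splitting of $\Delta_h\mathcal{A}$ by inserting $\mathcal{A}(x+h,Du(x))$, and same reliance on the Calder\'on--Zygmund theorem), but there is a genuine quantitative gap in the key $(\mathcal{A}4)$ term paired with $\eta^2\Delta_h Du$. You apply Young's inequality without weighting the $|\Delta_h Du|$ factor, absorbing $\varepsilon\int\eta^2|\Delta_h Du|^2$ into the good term (legitimate since $(1+|Du(x)|^2+|Du(x+h)|^2)^{(p(x)-2)/2}\geq 1$ when $p>2$), and are left with $\int\eta^2\kappa^2(1+|Du|^2)^{p(x)-1}\log^2(e+|Du|^2)\,dx$. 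The exponent $p(x)-1$ is too large. After H\"older against $\kappa^2\in L^{n/2}\log^{2\sigma}L$ one needs $|Du|^{\frac{2n(p(x)-1)}{n-2}}\in L^1_{\loc}$, which by Theorem \ref{habermann10} requires $|D\psi|^{\frac{2n(p(x)-1)}{n-2}}\in L^1_{\loc}$. But the hypothesis $D\psi\in W^{1,\gamma_2}_{\loc}$ only yields $D\psi\in L^{\gamma_2^*}_{\loc}$, and the required inequality $\frac{2n(\gamma_2-1)}{n-2}\leq\gamma_2^*=\frac{n\gamma_2}{n-\gamma_2}$ is equivalent to $2\gamma_2^2-(n+4)\gamma_2+2n\geq 0$, which fails for every $\gamma_2\in(2,n/2)$. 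Thus the plan as written cannot be carried out over most of the admissible range of exponents.

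The remedy, which is the paper's route, is to keep the elliptic weight inside the Young inequality: write $(1+|Du(x)|^2)^{(p(x)-1)/2}=(1+|Du(x)|^2)^{(p(x)-2)/4}\,(1+|Du(x)|^2)^{p(x)/4}$, pair the first factor with $|\Delta_h Du|$, and absorb $\varepsilon\int\eta^2|\Delta_h Du|^2(1+|Du(x)|^2+|Du(x+h)|^2)^{(p(x)-2)/2}$ into the good term. The surviving error then carries only $(1+|Du|^2)^{p(x)/2}$, which requires $|Du|^{p(x)}\in L^{n/(n-2)}_{\loc}$; this is exactly what Theorem \ref{habermann10} supplies, since $p(x)\frac{n}{n-2}\leq(p^+)^*\leq\gamma_2^*$ once $p\geq 2$. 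The same refinement is needed in the companion $(\mathcal{A}4)$ terms paired with $\tau_h D\psi$ and with $D\eta\,\tau_h(u-\psi)$, which you implicitly placed in your groups (a) and (b) but which carry the same $\kappa$-weighted logarithmic factors and hence also invoke Theorem \ref{habermann10} with a different $q$. A secondary, non-fatal difference: the paper decouples the logarithm via Lemma \ref{dislog} and Lemma \ref{normDu} rather than by an Orlicz--H\"older pairing; either tactic can work once the exponent on $|Du|$ is corrected.
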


\begin{remark}
	The assumption on $ \kappa $ in ($ \mathcal{A}4 $) yields the corresponding regularity for $ p $, see Theorem \ref{kp} and its consequences.
\end{remark}
Existence of solutions to the obstacle problem \eqref{obst-def0} can be easily proved through classical results regarding variational inequalities, so in this paper we will mainly concentrate on the regularity results. The main point will be the choice of  suitable test functions $\varphi$ in \eqref{obst-def} that involve the difference quotient of the solution but at the same time turns to be admissible for the obstacle class $\mathcal{K}_{\psi}(\Omega)$.

\section{Notations and preliminary results}

In this paper we denote by $ C $ a positive constant, that could vary in each line. We highlight in brackets the dependence on relevant parameters when needed.\\
We indicate with $ B(x_0,r)=B_r(x_0) =\left\{x\in\mathbb{R}^n:|x-x_0|<r\right\} $ the ball of center $ x_0 $ and radius $ r $, we omit the dependence on the  center and the radius if not necessary.\\
Let us introduce the following auxiliary function
\begin{equation}
V_p(\xi):=\left(1+|\xi|^2\right)^{\frac{p-2}{4}}\xi
\end{equation}
defined for all $ \xi \in \mathbb{R}^n $.

\subsection{Some elementary inequalities}
For the function $ V_p $ we recall the following result (for the proof see \cite{Giusti}, Lemma 8.3)
\begin{lemma} \label{Vi}
	Let $1<p<\infty$. There exists a constant $C=C(n,p)>0$
	such that
	$$
	C^{-1}\Bigl( 1+| \xi |^2+| \eta |^2 \Bigr)^{\frac{p-2}{2}}\leq
	\frac{|V_{p}(\xi )-V_{p}(\eta )|^2}{|\xi -\eta |^2} \leq
	C\Bigl( 1 +|\xi |^2+|\eta |^2 \Bigr)^{\frac{p-2}{2}}
	$$
	for any $\xi$, $\eta \in \R^n$.
\end{lemma}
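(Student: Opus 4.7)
The plan is to reduce the claim to a pointwise bound on the Jacobian matrix $DV_p$ along the segment joining $\eta$ and $\xi$, via the fundamental theorem of calculus, and then identify the resulting integral average of the weight $(1+|\cdot|^2)^{(p-2)/2}$ with the symmetric quantity $(1+|\xi|^2+|\eta|^2)^{(p-2)/2}$.

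Concretely, first I would write, for $\xi_t := \eta + t(\xi-\eta)$,
\begin{equation*}
V_p(\xi) - V_p(\eta) \;=\; \int_0^1 DV_p(\xi_t)\,(\xi-\eta)\,dt,
\end{equation*}
and compute the Jacobian explicitly from $V_p(\zeta)=(1+|\zeta|^2)^{(p-2)/4}\zeta$, obtaining
\begin{equation*}
DV_p(\zeta) \;=\; (1+|\zeta|^2)^{\frac{p-2}{4}} I + \tfrac{p-2}{2}(1+|\zeta|^2)^{\frac{p-6}{4}}\,\zeta\otimes\zeta.
\end{equation*}
From this formula, a direct inspection (decompose any test vector $w$ into its component parallel and perpendicular to $\zeta$, and use $|\zeta|^2\le 1+|\zeta|^2$) shows that the symmetric matrix $DV_p(\zeta)$ has both eigenvalues comparable to $(1+|\zeta|^2)^{(p-2)/4}$, with constants depending only on $p$: namely one eigenvalue is exactly $(1+|\zeta|^2)^{(p-2)/4}$ (the perpendicular direction), and the other, along $\zeta$, equals $(1+|\zeta|^2)^{(p-6)/4}\bigl(1+\tfrac{p}{2}|\zeta|^2\bigr)$, which is easily sandwiched between two positive multiples of $(1+|\zeta|^2)^{(p-2)/4}$ using only $p>1$.

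Substituting back and squaring, this reduces the lemma to the elementary integral comparison
\begin{equation*}
\int_0^1 (1+|\xi_t|^2)^{\frac{p-2}{2}}\,dt \;\approx\; (1+|\xi|^2+|\eta|^2)^{\frac{p-2}{2}},
\end{equation*}
with comparability constants depending only on $n$ and $p$. For $p\ge 2$ the exponent is non-negative, so the upper bound is immediate from $|\xi_t|^2\le 2|\xi|^2+2|\eta|^2$ and the lower bound follows by splitting the unit interval into the halves where either $|\xi_t|\gtrsim|\xi|$ or $|\xi_t|\gtrsim|\eta|$. For $1<p<2$ the inequalities reverse, and one instead shows that the integrand, as a function of $t$, is bounded below by a fixed positive multiple of $(1+|\xi|^2+|\eta|^2)^{(p-2)/2}$ on a subinterval of positive length and above by the same quantity near the endpoints $t=0,1$.

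The main obstacle is precisely this subquadratic range $1<p<2$, where one cannot simply discard the weight $(1+|\xi_t|^2)^{(p-2)/2}$ by monotonicity; the verification proceeds by a case split according to which of $|\xi|$ and $|\eta|$ dominates and by a change of variable that isolates where $\xi_t$ can be small. Once these elementary one-variable estimates are in place, combining them with the Jacobian bound from Step 2 yields both inequalities in Lemma \ref{Vi} simultaneously.
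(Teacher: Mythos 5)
The paper offers no proof of this lemma: it is quoted verbatim from \cite{Giusti}, Lemma 8.3. Your argument is essentially the standard proof of that result (integral representation of $V_p(\xi)-V_p(\eta)$ along the segment, two-sided control of the Jacobian $DV_p$, and a one-dimensional comparison of $\int_0^1(1+|\xi_t|^2)^{\alpha}\,dt$ with $(1+|\xi|^2+|\eta|^2)^{\alpha}$), so in spirit it matches the source the authors rely on; the Jacobian formula and the eigenvalue bounds are correct, and you correctly identify the subquadratic range as the delicate case of the integral comparison.

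Two points must be repaired before the reduction in your third paragraph is legitimate. First, the lower bound does not follow from the pointwise estimate $|DV_p(\xi_t)w|\geq c_p(1+|\xi_t|^2)^{\frac{p-2}{4}}|w|$ alone: the vector $DV_p(\xi_t)(\xi-\eta)$ may rotate with $t$, so the norm of its integral could a priori be much smaller than the integral of its norm. Since you have shown that each $DV_p(\xi_t)$ is symmetric with both eigenvalues at least $c_p(1+|\xi_t|^2)^{\frac{p-2}{4}}$, the standard remedy is to pair with $\xi-\eta$: $|V_p(\xi)-V_p(\eta)|\,|\xi-\eta|\geq\langle V_p(\xi)-V_p(\eta),\xi-\eta\rangle=\int_0^1\langle DV_p(\xi_t)(\xi-\eta),\xi-\eta\rangle\,dt\geq c_p|\xi-\eta|^2\int_0^1(1+|\xi_t|^2)^{\frac{p-2}{4}}\,dt$. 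Second, after squaring, the quantity to be compared with $(1+|\xi|^2+|\eta|^2)^{\frac{p-2}{2}}$ is $\bigl(\int_0^1(1+|\xi_t|^2)^{\frac{p-2}{4}}\,dt\bigr)^2$ and not $\int_0^1(1+|\xi_t|^2)^{\frac{p-2}{2}}\,dt$; Cauchy--Schwarz relates these two in one direction only, so you should prove the comparison lemma for the exponent $\alpha=\frac{p-2}{4}$ (it holds for every $\alpha>-\frac12$, hence for every $p>1$; your case analysis --- monotonicity for $\alpha\ge0$, and the Pythagoras plus change-of-variable argument near the minimum of $t\mapsto|\xi_t|$ for $\alpha<0$ --- is exactly the right one) and then square. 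With these adjustments the argument is complete.
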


In the sequel we will often use also the inequality (see \cite{GPdN15})
\begin{lemma} \label{dislog}
	For every $s,t>0$ and for every $\alpha,\varepsilon,\gamma>0$ we have
	$$st\le\varepsilon s\log^\alpha(e+\gamma s)+\frac{t}{\gamma}\left[\exp\left(\frac{t}{\varepsilon}\right)^\frac{1}{\alpha}-1\right] $$ 
\end{lemma}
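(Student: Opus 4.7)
The plan is to dispose of this Young-type inequality by a simple two-case split, exploiting the fact that both terms on the right-hand side are nonnegative for the positive parameters $s,t,\varepsilon,\gamma,\alpha>0$. The dichotomy will be based on whether $t$ is small or large compared to $\varepsilon\log^\alpha(e+\gamma s)$, and in each case only one of the two summands on the right-hand side will be needed to dominate $st$, the other being discarded as a nonnegative quantity.

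In the first regime, when $t\le\varepsilon\log^\alpha(e+\gamma s)$, I would simply multiply through by $s>0$ to obtain
$$st\le\varepsilon s\log^\alpha(e+\gamma s),$$
and then drop the nonnegative second summand $\tfrac{t}{\gamma}[\exp((t/\varepsilon)^{1/\alpha})-1]$ (which is $\ge 0$ because $t,\varepsilon>0$ force $\exp((t/\varepsilon)^{1/\alpha})\ge 1$). In the complementary regime $t>\varepsilon\log^\alpha(e+\gamma s)$, I would divide by $\varepsilon>0$, take the $(1/\alpha)$-th power (monotone since $\alpha>0$), and then exponentiate to deduce $\exp((t/\varepsilon)^{1/\alpha})>e+\gamma s$. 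Rearranging and using the trivial bound $e\ge 1$ gives
$$\gamma s<\exp\bigl((t/\varepsilon)^{1/\alpha}\bigr)-e\le\exp\bigl((t/\varepsilon)^{1/\alpha}\bigr)-1,$$
and multiplication by $t/\gamma>0$ produces $st\le\tfrac{t}{\gamma}[\exp((t/\varepsilon)^{1/\alpha})-1]$, after which the nonnegative summand $\varepsilon s\log^\alpha(e+\gamma s)$ on the right can be discarded.

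The whole argument is elementary and does not call on any of the preliminary machinery from the excerpt (neither the auxiliary function $V_p$ nor Lemma \ref{Vi}). The only mild point to watch is the harmless replacement of $-e$ by $-1$ in the second case, which is precisely what allows the inequality to be stated cleanly with $\exp(\cdot)-1$ rather than $\exp(\cdot)-e$; I do not anticipate any genuine obstacle, and in particular no appeal to Young's inequality in integral form or to Legendre duality is required.
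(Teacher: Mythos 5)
Your argument is correct: the dichotomy on whether $t\le\varepsilon\log^{\alpha}(e+\gamma s)$ or not, discarding the unused nonnegative summand in each case, is exactly the standard proof of this inequality, and each step (monotonicity of $x\mapsto x^{1/\alpha}$ on the positives, of $\exp$, and the harmless replacement of $-e$ by $-1$) is sound. The paper itself does not prove the lemma but only cites \cite{GPdN15}, where the argument is the same elementary case split, so there is nothing to reconcile.
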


\subsection{Spaces with variable exponents}

\begin{definition}
	Let $ \Omega $ be a bounded subset of $ \mathbb{R}^n $. \\
	Let $ p:\Omega\rightarrow (1,+\infty) $ be a continuous function, the space $ L^{p(\cdot)}(\Omega,\mathbb{R}) $ is defined as
	$$ L^{p(\cdot)}(\Omega,\mathbb{R}):=\left\{f:\Omega \rightarrow \mathbb{R} : f \text{ is measurable and} \int_{\Omega}|f(x)|^{p(x) }dx< +\infty\right\} $$
	and, equipped with the Luxemburg norm
	$$ ||f||_{L^{p(\cdot)}(\Omega,\mathbb{R})} := \text{inf}\left\{\lambda>0 : \int_\Omega\left|\frac{f(x)}{\lambda}\right|^{p(x)}dx \le 1\right\}$$
	it becomes a separable Banach space.
\end{definition}

Then, we recall the definition of Sobolev space with variable exponents.
\begin{definition}
	The space $ W^{1,p(x)}(\Omega,\mathbb{R})$ is defined as
	$$ W^{1,p(x)}(\Omega,\mathbb{R}):=\left\{f \in L^{p(x)}(\Omega,\mathbb{R}) : Df \in L^{p(x)}(\Omega,\mathbb{R}^n)\right\} $$
	where $ Df $ denotes the gradient of $ f $. This space becomes a Banach space equipped with the norm:
	$$ ||f||_{ W^{1,p(x)}(\Omega,\mathbb{R})}:= ||f||_{L^{p(x)}(\Omega,\mathbb{R})} + ||Df||_{L^{p(x)}(\Omega,\mathbb{R}^n)} $$
\end{definition}
For more details concerning spaces with variable exponents see \cite{Di}.

\subsection{Difference quotient}
\medskip
\noindent
For every   function $F:\mathbb{R}^{n}\to\mathbb{R}$
the finite difference operator is defined as
$$
\tau_{s,h}F(x):=F(x+he_{s})-F(x)
$$
where $h\in\mathbb{R}$, $e_{s}$ is the unit vector in the $x_{s}$
direction and $s\in\{1,\ldots,n\}$.

	

\noindent The following result concerning finite difference operator is a kind of integral version
of Lagrange Theorem.
\begin{lemma}\label{le1} If \space $0<\rho<R$, $|h|<\frac{R-\rho}{2}$, $1 < p <+\infty$,
	and $F, DF\in L^{p}(B_{R})$ then
	$$
	\int_{B_{\rho}} |\tau_{h} F(x)|^{p}\ dx\leq c(n,p)|h|^{p} \int_{B_{R}}
	|D F(x)|^{p}\ dx .
	$$
	Moreover
	$$
	\int_{B_{\rho}} |F(x+h )|^{p}\ dx\leq  \int_{B_{R}}
	|F(x)|^{p}\ dx .
	$$
\end{lemma}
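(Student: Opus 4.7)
The plan is to establish both estimates first when $F$ is smooth, where everything reduces to the fundamental theorem of calculus, and then extend to a general $F$ with $F,DF \in L^p(B_R)$ by a standard mollification argument. Throughout, the constraint $|h|<(R-\rho)/2$ is used precisely to guarantee that the translates of $B_\rho$ that appear in the estimate stay inside $B_R$, where $F$ and $DF$ are controlled.

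For smooth $F$ and a coordinate direction $s \in \{1,\ldots,n\}$, I would write the increment as a one-dimensional integral of the directional derivative,
\[
\tau_{s,h}F(x)=F(x+he_s)-F(x)=h\int_0^1 \partial_s F(x+the_s)\,dt,
\]
and apply Jensen's (equivalently Hölder's) inequality in the $t$-variable to get
\[
|\tau_{s,h}F(x)|^p\le |h|^p\int_0^1 |DF(x+the_s)|^p\,dt.
\]
Integrating over $B_\rho$ and exchanging orders with Fubini yields
\[
\int_{B_\rho}|\tau_{s,h}F(x)|^p\,dx\le |h|^p\int_0^1\!\int_{B_\rho}|DF(x+the_s)|^p\,dx\,dt.
\]
The hypothesis $|h|<(R-\rho)/2$ forces $B_\rho+the_s\subset B_{\rho+|h|}\subset B_R$ for every $t\in[0,1]$, so the translation $x\mapsto x+the_s$ is volume-preserving and sends $B_\rho$ into $B_R$. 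Hence the inner integral is bounded by $\int_{B_R}|DF(y)|^p\,dy$, which gives the first estimate with constant $c(n,p)=1$ (a larger $c(n,p)$ absorbs, if desired, the $n$ coordinate directions when $\tau_h$ is interpreted as a general shift).

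To drop the smoothness assumption, I would pick an intermediate radius $\rho<R'<R$ with $|h|<(R'-\rho)/2$ and $R'+|h|<R$, and mollify by convolution $F_\varepsilon=F*\eta_\varepsilon$ with a standard smooth kernel. For $\varepsilon$ small enough, $F_\varepsilon\in C^\infty(\overline{B_{R'}})$ and $F_\varepsilon\to F$, $DF_\varepsilon\to DF$ in $L^p(B_{R'})$; applying the smooth estimate to $F_\varepsilon$ on the pair $B_\rho\subset B_{R'}$ and sending $\varepsilon\to 0^+$ yields the general bound. The second inequality is then an immediate change of variables: since $|h|<(R-\rho)/2<R-\rho$, the translated ball $B_\rho+he_s$ is contained in $B_R$, so
\[
\int_{B_\rho}|F(x+he_s)|^p\,dx=\int_{B_\rho+he_s}|F(y)|^p\,dy\le \int_{B_R}|F(y)|^p\,dy.
\]

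No genuine obstacle arises; the only bookkeeping needed is to insert the intermediate radius $R'$ so that the mollified functions approximate $F$ and $DF$ in $L^p$ on a set still large enough to contain every translate used in the smooth estimate, guaranteeing that the bound passes to the limit with constants independent of $\varepsilon$.
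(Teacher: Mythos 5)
Your proof is correct and is exactly the classical argument (fundamental theorem of calculus along the segment, Jensen, Fubini, translation invariance, then mollification) that the paper implicitly invokes by citing Giusti's book rather than proving the lemma itself. The bookkeeping with the intermediate radius $R'$ and the observation that $|h|<(R-\rho)/2$ keeps all translates of $B_\rho$ inside $B_R$ are handled properly, so nothing is missing.
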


\bigskip

\noindent Now let us recall the fundamental Sobolev embedding property.
\begin{lemma}\label{lep} Let $F:\mathbb{R}^{n}\to\mathbb{R}$, $F\in
	L^{p}(B_{R})$ with $1<p<n$. Suppose that there exist $\rho\in(0,R)$ and  $M>0$ such that
	$$
	\sum_{s=1}^{n}\int_{B_{\rho}}|\tau_{s,h}F(x)|^{p}\,dx\leq
	M^{p} |h|^{p},
	$$
	for every $h$ with $|h|<\frac{R-\rho}{2}$. Then $F\in
	W^{1,p}(B_{\rho})\cap L^{\frac{np}{n-p}}(B_{\rho})$. Moreover
	$$
	||DF||_{L^{p}(B_{\rho})}\leq
	M
	$$
	and
	$$
	||F||_{L^{\frac{np}{n-p}}(B_{\rho})}\leq
	c\left(M+||F||_{L^{p}(B_{R})}\right),
	$$
	with $c= c(n,p, \rho,R)$.
\end{lemma}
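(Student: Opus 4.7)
The plan is to exploit the reflexivity of $L^p$ (here $1<p<n$, so in particular $p\in(1,\infty)$). The hypothesis says precisely that the family of scaled finite differences $\{\tau_{s,h}F/h\}_h$ is bounded in $L^p(B_\rho)$, uniformly in $h$ with $|h|<(R-\rho)/2$, and for each $s\in\{1,\dots,n\}$. By the Banach--Alaoglu theorem this family is relatively weakly compact, so along any sequence $h_k\to 0$ one can extract, via a diagonal argument over the finitely many indices $s$, a (non-relabeled) subsequence such that $\tau_{s,h_k}F/h_k\weak g_s$ in $L^p(B_\rho)$ for every $s$.

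The main step is to identify each weak limit $g_s$ with the distributional derivative $\partial_s F$. I would test against an arbitrary $\varphi\in C_c^\infty(B_\rho)$ and shift variables:
\begin{equation*}
\int_{B_\rho}\frac{F(x+h_k e_s)-F(x)}{h_k}\varphi(x)\,dx=-\int_{B_\rho}F(x)\frac{\varphi(x)-\varphi(x-h_k e_s)}{h_k}\,dx,
\end{equation*}
which is valid once $|h_k|$ is smaller than the distance from $\text{supp}\,\varphi$ to $\partial B_\rho$. The left-hand side converges to $\int_{B_\rho}g_s\varphi\,dx$ by weak convergence, while the right-hand side converges to $-\int_{B_\rho}F\,\partial_s\varphi\,dx$ by dominated convergence (the smoothness of $\varphi$ gives uniform convergence of its difference quotient to $\partial_s\varphi$, and $F\in L^p(B_R)\subset L^1_{\rm loc}$). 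This shows $g_s=\partial_s F$ in the distributional sense; since $g_s\in L^p(B_\rho)$ we conclude $F\in W^{1,p}(B_\rho)$. The norm bound $\|DF\|_{L^p(B_\rho)}\le M$ then follows from the weak lower semicontinuity of the $L^p$-norm applied componentwise, combined with the hypothesis $\sum_s\int_{B_\rho}|\tau_{s,h}F/h|^p\,dx\le M^p$.

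With $F\in W^{1,p}(B_\rho)$ and $1<p<n$, the classical Sobolev--Gagliardo--Nirenberg embedding $W^{1,p}(B_\rho)\hookrightarrow L^{np/(n-p)}(B_\rho)$ yields
\begin{equation*}
\|F\|_{L^{np/(n-p)}(B_\rho)}\le c(n,p,\rho,R)\bigl(\|F\|_{L^p(B_\rho)}+\|DF\|_{L^p(B_\rho)}\bigr)\le c\bigl(M+\|F\|_{L^p(B_R)}\bigr),
\end{equation*}
which is the second claimed estimate. The argument is essentially textbook material; the only technical point to watch is the translation identity in the identification step, which requires choosing $|h_k|$ small enough that $\text{supp}\,\varphi\pm h_k e_s$ stays inside $B_\rho$. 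This is the mildly delicate part, but it is automatic from the running assumption $|h|<(R-\rho)/2$ after possibly shrinking $|h_k|$ further in a way depending on $\varphi$, and causes no real trouble since we only need the convergence of the testing integral.
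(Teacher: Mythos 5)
Your argument is correct and is the standard proof of this lemma; the paper itself does not prove it but refers to \cite[Lemma 8.2]{Giusti}, whose argument is exactly the one you give (weak compactness of the scaled difference quotients in the reflexive space $L^{p}$, identification of the weak limit with the distributional derivative by discrete integration by parts, lower semicontinuity of the norm, and the Sobolev embedding for the $L^{np/(n-p)}$ bound). No gaps to report.
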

For the proof see, for example, \cite[Lemma 8.2]{Giusti}.

\bigskip

\subsection{Calder\'on-Zygmund estimates}

We will need in the sequel to apply this result that can be found in \cite{EH10} and that can be applied because our assumptions on $ \mathcal{A} $ entail tha corresponding ones on $ F $ and moreover because the assumption ($ \mathcal{A}4 $) on $ \kappa $ entails the corresponding regularity for $ p $, see Theorem \ref{kp} and remark \ref{modp}. The original result is stated for cubes, but, by suitable modifications, it can be stated for balls.

\begin{theorem}
	\label{habermann10}
	Let $ u\in W^{1,p(x)} $ be a solution to the obstacle problem \eqref{obst-def}, where $ \mathcal{A} $ satisfies the assumptions $(\mathcal{A}1)-(\mathcal{A}4)$ and where $ \psi $ is a given obstacle function which satisfies
	$$ |D\psi|^{p(x)}\in L^q_{\loc} (\Omega)$$
	for some $ q>1 $. Then $ |Du|^{p(x)}\in L^q_{\loc} (\Omega) $.\\
	In particular there holds: if  $\, \Omega' \Subset\Omega$ and $ |D\psi|^{p(x)}\in L^q (\Omega') $, then for any given $ \varepsilon\in(0,q-1) $ there exists a positive radius $ R_0>0 $, depending on $ n,\gamma_1, \gamma_2, \nu, L, \varepsilon, q, \Vert|Du|^{p(x)}\Vert_{L^1(\Omega')},\Vert|D\psi|^{p(x)}\Vert_{L^q(\Omega')} $, such that for any ball $ B_{8R}\Subset\Omega' $ and $ R\le R_0 $ there holds
	$$ \left[\fint_{B_R}|Du|^{p(x)q}\,dx\right]^{1/q}\le C K^\varepsilon \fint_{B_{8R}}|Du|^{p(x)}\,dx + C K^\varepsilon \left[\fint_{B_{8R}}|D\psi|^{p(x)q}\,dx+1\right]^{1/q}$$
	where $ C=C(n,\gamma_1,\gamma_2,\nu,L,q) $ and
	$$ K:=\int_{B_{8R}}(|Du|^{p(x)}+|D\psi|^{p(x)(1+\varepsilon)})\,dx+1 $$
\end{theorem}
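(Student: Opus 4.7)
The plan is to adapt the Calderón--Zygmund iteration of Habermann \cite{EH10} for variable exponent functionals to the obstacle setting. The original result is stated for unconstrained minimizers on cubes; the passage to balls is routine via the usual cube/ball equivalence, so the substance of the argument carries over unchanged.

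First I would verify that our hypotheses yield those of \cite{EH10}. Growth, ellipticity and continuity of $\mathcal{A}$ are encoded by $(\mathcal{A}1)$--$(\mathcal{A}3)$. The regularity needed for the exponent --- a modulus of continuity $\omega$ of $p$ satisfying $\omega(R)\log(1/R)\to 0$ as $R\to 0$ --- is furnished by $(\mathcal{A}4)$ together with Theorem \ref{kp} and the Orlicz--Sobolev embedding recalled in the introduction. With these ingredients Habermann's scheme proceeds by: (i) fixing $B_{8R}\Subset\Omega'$ with $R\le R_0$ small; (ii) freezing the exponent at $p_0:=p(x_0)$, where $x_0$ is the centre of the ball; (iii) constructing a comparison map $v$ that solves the \emph{constant--exponent} obstacle problem on $B_{8R}$ with obstacle $\psi$ and boundary datum $u$; (iv) applying the classical Calderón--Zygmund estimate for constant--exponent obstacle problems (e.g.\ \cite{Mingione}) to bound $\fint_{B_R}|Dv|^{p_0 q}$ by $\fint_{B_{8R}}|Dv|^{p_0}$ plus the obstacle term; and (v) using $(\mathcal{A}1)$, $(\mathcal{A}4)$ and Lemma \ref{dislog} to absorb the logarithmic oscillation $\log(e+|\xi|^2)$ appearing in $(\mathcal{A}4)$ into a higher--integrability factor, which is precisely what produces the exponential term $K^\varepsilon$ in the statement.

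The main obstacle is the comparison step (iii). Neither $u-v$ nor $v-u$ is automatically an admissible variation in the variational inequality of the other, since both minimizers must stay above $\psi$. The standard workaround is to test \eqref{obst-def} for $u$ against $\varphi=\max\{v,\psi\}=v+(\psi-v)_+$, and symmetrically for $v$ against $\max\{u,\psi\}$. On the respective contact sets the two inequalities reduce to identities involving only $D\psi$, while the remaining integrals combine --- via the monotonicity of $\mathcal{A}$ in $\xi$ --- into an estimate for $|V_{p_0}(Du)-V_{p_0}(Dv)|^2$ integrated over $B_{8R}$. Applying $(\mathcal{A}4)$ together with Lemma \ref{dislog} puts the small factor $\omega(R)\log(1/R)$ in front of $\int_{B_{8R}}(1+|Du|^{p(x)}+|D\psi|^{p(x)})\,dx$; choosing $R\le R_0$ small enough so that this prefactor falls below the threshold required by Habermann's Vitali/stopping--time iteration allows the iteration to run, yielding the final bound with the exponent $q$ on the left and the higher integrability exponent $p(x)(1+\varepsilon)$ concealed inside $K$ on the right.
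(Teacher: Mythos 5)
The paper does not actually prove Theorem~\ref{habermann10}: it simply cites \cite{EH10} (Eleuteri--Habermann, \emph{Calder\'on--Zygmund type estimates for a class of obstacle problems with $p(x)$ growth}) and checks that the hypotheses $(\mathcal{A}1)$--$(\mathcal{A}4)$, together with Theorem~\ref{kp} and Remark~\ref{modp}, supply the structure and exponent-regularity conditions required by that reference; the only adaptation needed is the routine passage from cubes to balls. Your proposal instead reconstructs an internal proof, so it follows a genuinely different (and more labor-intensive) route than the paper.

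Your starting premise contains a factual error that drives the extra work: you state that ``the original result is stated for unconstrained minimizers on cubes,'' so that the substance of your argument is the adaptation of the comparison step to the obstacle setting. In fact, \cite{EH10} already treats obstacle problems with $p(x)$-growth directly, so your step (iii), the discussion of testing $u$ against $\max\{v,\psi\}$ and $v$ against $\max\{u,\psi\}$, and the analysis of the contact sets, while plausible as a sketch of what \cite{EH10} does internally, is not an adaptation you need to supply. The only adaptation the paper claims is geometric (cubes $\to$ balls). Your outline of freezing the exponent, comparing with a constant-exponent obstacle problem, invoking a constant-exponent Calder\'on--Zygmund bound, and absorbing the logarithmic oscillation via $(\mathcal{A}4)$ and Lemma~\ref{dislog} is a reasonable reconstruction, but it is at the level of a plan rather than a complete argument, and the hard quantitative part (the Vitali/stopping-time iteration, the precise role of $R_0$ relative to $\omega(R)\log(1/R)$, and how $K^\varepsilon$ emerges with the stated dependence of $C$) is not carried out. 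Since the paper simply delegates all of this to the cited reference, the cleaner and intended route is to verify the hypotheses of \cite{EH10} and transfer the cube statement to balls, rather than re-prove the result.
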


\begin{remark}
	As a consequence of our result of Calder\'on-Zygmund type we can deduce the clasical higher integrability result, like \cite[Lemma 3.2]{EH08}, so in particular there exist two positive constants $c_0,\delta$ such that 
	\begin{equation}
	\label{magg-int}
	 \left[\fint_{B_{R/2}}|Du|^{p(x)(1+\delta)}\,dx\right]^{1/(1+\delta)}\le c_0 \fint_{B_R}|Du|^{p(x)}\,dx + c_0 \left[\fint_{B_R}(|D\psi|^{p(x)(1+\delta)}+1)\,dx\right]^{1/(1+\delta)}
   \end{equation}
\end{remark}

\subsection{Spaces  $L^p \log^\sigma L$ }\label{logspace}

\begin{definition}
	The space $ L^p\log^\sigma L(\Omega,\mathbb{R}) $ is defined for every $ p\ge 1 $ and $ \sigma \in \mathbb{R} $ as 
	$$  L^p\log^\sigma L(\Omega,\mathbb{R}) :=\left\{f:\Omega \rightarrow \mathbb{R} : f \text{ is measurable and} \int_\Omega |f|^p \, \log^\sigma (e + |f|)dx < +\infty\right\} $$
	and equipped with the Luxemburg norm
	$$ ||f||_{L^p\log^\sigma L(\Omega,\mathbb{R})}:=\inf\left\{\lambda>0 : \int_\Omega \left|\frac{f}{\lambda}\right|^p \, \log^\sigma\left(e + \frac{|f|}{\lambda}\right)dx\le 1\right\} $$
	it becomes a Banach space.
\end{definition}	
	Set
	$$ [f]_{L^p\log^\sigma L(\Omega,\mathbb{R})} = \left( \int_\Omega|f|^p\, \log^\sigma \left(e + \frac{|f|}{||f||_p}\right)dx\right)^\frac{1}{p}$$
	We recall that, for $ p\ge1 $ and $ \sigma \ge 0 $, there exist two positive constant $ \lambda=\lambda(p,\sigma) $ and $ \Lambda=\Lambda(p,\sigma) $ such that
	$$ \lambda [f]_{L^p\log^\sigma L(\Omega,\mathbb{R})} \le ||f||_{L^p\log^\sigma L(\Omega,\mathbb{R})}  \le \Lambda[f]_{L^p\log^\sigma L(\Omega,\mathbb{R})} $$
 We recall the following fact which can be found in \cite{Iw92, IwVe}, see also \cite{AM05}.
	\begin{lemma}\label{normDu}
		For any $p>1$, $h \in L^p(B_R)$ and $\alpha>1$, it holds
		$$\fint_{B_R} |h| \log^\alpha \left(e+\frac{|h|}{\|h\|_1}\right)dx\le c(\alpha,p) \left( \fint_{B_R} |h|^p dx\right)^{1/p}$$
		where, for any $p\ge1$ 	$$\|h\|_p:=\left(\fint_\Omega |h|^p dx\right)^{1/p}.$$
	\end{lemma}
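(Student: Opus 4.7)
The inequality is homogeneous of degree one in $h$, since the substitution $h\mapsto \lambda h$ with $\lambda>0$ multiplies both sides by $\lambda$ and leaves the ratio $|h|/\|h\|_1$ inside the logarithm invariant. I would therefore begin by normalizing and reduce to the case $\fint_{B_R}|h|\,dx=1$, so that the goal becomes
\[
I:=\fint_{B_R}|h|\log^\alpha(e+|h|)\,dx\le c(\alpha,p)\,N,\qquad N:=\Bigl(\fint_{B_R}|h|^p\,dx\Bigr)^{1/p}.
\]
Jensen's inequality gives $N\ge 1$, a fact which will be used repeatedly in what follows.

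I would then split the integration domain at the threshold $N$, writing $A:=\{|h|\le N\}$ and $B:=\{|h|>N\}$. On $A$ one has $\log^\alpha(e+|h|)\le \log^\alpha(e+N)$, and since the ratio $\log^\alpha(e+s)/s$ is bounded on $[1,\infty)$, this is at most $c_\alpha N$. Combined with the normalization $\fint_{B_R}|h|\,dx=1$, this already gives the bound
\[
\fint_A|h|\log^\alpha(e+|h|)\,dx\le c_\alpha N.
\]

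For the contribution on $B$, where $|h|\ge N\ge 1$, I would invoke the elementary estimate $\log^\alpha(e+t)\le c(\alpha,\eps)\,t^\eps$, valid for any $\eps>0$ and $t\ge 1$ (since $t^{-\eps}\log^\alpha(e+t)$ is continuous on $[1,\infty)$ and vanishes at infinity), reducing matters to controlling $\fint_B|h|^{1+\eps}\,dx$. Choosing $\eps\in(0,p-1)$, Hölder's inequality with exponents $p/(1+\eps)$ and $p/(p-1-\eps)$ yields
\[
\fint_B|h|^{1+\eps}\,dx\le N^{1+\eps}\Bigl(\frac{|\{|h|>N\}|}{|B_R|}\Bigr)^{(p-1-\eps)/p},
\]
and Chebyshev's inequality applied to $|h|\in L^1(B_R)$, together with $\fint_{B_R}|h|\,dx=1$, provides the crucial density bound $|\{|h|>N\}|/|B_R|\le 1/N$. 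Plugging this in gives the bound $N^{((p+1)\eps+1)/p}$, and the choice $\eps=(p-1)/(p+1)$ makes the resulting exponent exactly $1$; the $B$--contribution is therefore also controlled by $c(\alpha,p)\,N$.

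Summing the two estimates and undoing the initial scaling produces the claim. The main delicate point is that the inequality would be false without the factor $\|h\|_1$ inside the logarithm: both the homogeneity reduction and the Chebyshev decay $1/N$ rely crucially on this normalization, and the parameter $\eps$ must be tuned precisely so that the powers of $N$ arising from Hölder and Chebyshev combine to exactly $N^1$ on the right-hand side, matching $(\fint|h|^p)^{1/p}$ without any logarithmic loss.
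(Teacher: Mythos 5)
Your argument is correct. The paper itself gives no proof of Lemma \ref{normDu}: it simply recalls the inequality from \cite{Iw92, IwVe} (see also \cite{AM05}), where it is usually derived via maximal-function or rearrangement techniques. Your route is an elementary, self-contained alternative: the homogeneity reduction to $\fint_{B_R}|h|=1$ is legitimate (both sides scale linearly and the argument of the logarithm is invariant), Jensen gives $N\ge 1$, the bound $\log^\alpha(e+N)\le c_\alpha N$ handles the set $\{|h|\le N\}$, and on $\{|h|>N\}$ the chain $\log^\alpha(e+t)\le c(\alpha,\eps)t^\eps$, H\"older with exponents $p/(1+\eps)$ and $p/(p-1-\eps)$, and Chebyshev ($|\{|h|>N\}|/|B_R|\le 1/N$) combine to give $N^{((p+1)\eps+1)/p}$, which equals $N$ precisely for $\eps=(p-1)/(p+1)\in(0,p-1)$. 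I checked the exponent arithmetic and it is exact. Two minor remarks: your proof never uses the hypothesis $\alpha>1$ (any $\alpha>0$ works, so you prove slightly more), and the degenerate case $h\equiv 0$ should be dispatched separately before normalizing. What your approach buys is transparency and explicit dependence of the constant on $(\alpha,p)$; what it gives up is the sharper Orlicz-duality viewpoint of the cited references, which is not needed here.
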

	Next, we recall an embedding Theorem in the Orlicz-Sobolev setting, for more details see \cite{Cianchi}. 
	
	\begin{theorem} \label{kp}
		Let $ \kappa \in W^{1,1}(\Omega) $ be a function such that $ |D\kappa|\in L^n\,\log^\sigma L(\Omega) $, for some $ \sigma > n-1 $. Then $ \kappa \in C^0(\Omega) $ and
		$$ |\kappa(x)-\kappa(y)|\le\frac{c_n}{\log\left(e+\frac{1}{|x-y|}\right)^{\frac{\sigma-n+1}{n}}}||D\kappa||_{L^n\log^\sigma L(\Omega)} $$
	\end{theorem}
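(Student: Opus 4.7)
The plan is to establish the inequality first for smooth $\kappa$ via an integral representation combined with the Orlicz--H\"older inequality, and then extend to general $\kappa \in W^{1,1}(\Omega)$ with $|D\kappa|\in L^n\log^\sigma L(\Omega)$ by mollification. The density step is legitimate because the Young function $\Phi(t)=t^n\log^\sigma(e+t)$ satisfies the $\Delta_2$ condition, so smooth functions are dense in $L^\Phi(\Omega)$; one then applies the inequality at Lebesgue points of $\kappa$ and identifies $\kappa$ with its continuous representative.

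The starting analytic input is the classical Sobolev--Poincar\'e pointwise representation: for any ball $B_R(x_0)\Subset\Omega$ and a.e.\ $z \in B_R(x_0)$,
\begin{equation*}
|\kappa(z) - \kappa_{B_R(x_0)}| \le C_n \int_{B_R(x_0)} \frac{|D\kappa(w)|}{|w-z|^{n-1}}\,dw.
\end{equation*}
Applying this with $x_0=(x+y)/2$ and $R=r=|x-y|$, at $z=x$ and at $z=y$, and using the triangle inequality, reduces the problem to bounding the Riesz-type integral
\begin{equation*}
I(z) := \int_{B_{2r}(z)} \frac{|D\kappa(w)|}{|w-z|^{n-1}}\,dw.
\end{equation*}
I then apply the Orlicz--H\"older inequality $\int fg\,dw \le 2\|f\|_\Phi\|g\|_\Psi$ with $f=|D\kappa|$ and $g(w)=|w-z|^{-(n-1)}\chi_{B_{2r}(z)}(w)$, where $\Phi(t)=t^n\log^\sigma(e+t)$ and its Legendre conjugate $\Psi$ satisfies $\Psi(s)\simeq s^{n'}\log^{-\sigma/(n-1)}(e+s)$ for large $s$. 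This gives $I(z)\le 2\|D\kappa\|_\Phi\|g\|_\Psi$.

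The crux is the explicit computation of $\|g\|_\Psi$. For $\lambda>0$, passing to polar coordinates and exploiting the borderline identity $(n-1)n'=n$ yields
\begin{equation*}
\int_{B_{2r}(z)}\Psi\!\left(\frac{|w-z|^{-(n-1)}}{\lambda}\right)dw \simeq \lambda^{-n'}\int_0^{2r}\rho^{-1}\bigl((n-1)\log(1/\rho)-\log\lambda\bigr)^{-\sigma/(n-1)}\,d\rho,
\end{equation*}
and the substitution $u=(n-1)\log(1/\rho)-\log\lambda$ transforms the right-hand side into $\lambda^{-n'}\int_{u_0}^{\infty}u^{-\sigma/(n-1)}du$ with $u_0\simeq(n-1)\log(1/r)$. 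This last integral is finite \emph{precisely} when $\sigma/(n-1)>1$, i.e.\ $\sigma>n-1$, and is then of order $(\log(1/r))^{-(\sigma-n+1)/(n-1)}$. Choosing $\lambda=C_n(\log(1/r))^{-(\sigma-n+1)/n}$ as the smallest value making the entire expression $\le 1$ gives $\|g\|_\Psi\le C_n(\log(1/r))^{-(\sigma-n+1)/n}$, which combined with the representation formula yields the announced modulus of continuity.

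The main obstacle is exactly this sharp Luxemburg-norm computation of the singular kernel $|w-z|^{-(n-1)}$ in the borderline Orlicz space. A crude dyadic/Morrey treatment of the Riesz potential would force the stronger hypothesis $\sigma>n$ and produce the incorrect exponent $(\sigma-n)/n$; only the direct Orlicz--H\"older duality against the conjugate $\Psi$, together with the arithmetic identity $(n-1)n'=n$ that puts the radial integrand exactly at the logarithmic threshold, delivers the sharp condition $\sigma>n-1$ and the precise exponent $(\sigma-n+1)/n$.
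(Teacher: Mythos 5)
The paper does not actually prove this statement: it is imported wholesale from the cited work of Cianchi on Orlicz--Sobolev embeddings, so there is no internal proof to compare against. Your argument is a correct, self-contained derivation, and it is the natural potential-theoretic one. The three ingredients all check out: the subrepresentation formula $|\kappa(z)-\kappa_{B}|\le C_n\int_B |D\kappa(w)|\,|w-z|^{-(n-1)}dw$; the identification of the conjugate Young function $\Psi(s)\simeq s^{n'}\log^{-\sigma/(n-1)}(e+s)$ of $\Phi(t)=t^n\log^\sigma(e+t)$; and the Luxemburg-norm computation of the kernel, where $(n-1)n'=n$ makes the radial integral logarithmically borderline, convergent exactly for $\sigma/(n-1)>1$, and the optimization in $\lambda$ converts the exponent $(\sigma-n+1)/(n-1)$ into $(\sigma-n+1)/n$ through division by $n'$. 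This reproduces both the sharp threshold $\sigma>n-1$ and the stated modulus, whereas Cianchi's route goes through rearrangements and interpolation of operators; your version is more elementary and makes the role of the hypothesis $\sigma>n-1$ transparent, which is a genuine gain in a paper that only needs this one estimate.

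Two small points you should make explicit. First, the subrepresentation formula holds for balls contained in $\Omega$, so as written your argument proves the estimate for $x,y$ with $B_{2|x-y|}\bigl(\tfrac{x+y}{2}\bigr)\subset\Omega$, i.e.\ a local statement (the theorem as stated in the paper is equally imprecise for a general bounded open set, and only the local version is used, so this is harmless but worth flagging). Second, in the change of variables the lower limit is $u_0=(n-1)\log(1/(2r))-\log\lambda$, which depends on the unknown $\lambda$; since the optimal $\lambda$ tends to $0$ as $r\to 0$, the term $-\log\lambda$ is eventually positive and only shrinks the integral, so the choice is self-consistent, but a one-line verification of this (or a restriction to $\lambda\le 1$) should be inserted to close the loop.
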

\begin{remark}\label{modp}
Observe that the right hand side represents a modulus of continuity  $ \omega(\cdot) $ for the function $ \kappa $. Hence if $ \sigma>2n-1 $ then it has the property
\begin{equation} \label{modcont}
\lim_{R\rightarrow 0} \omega (R) \log\left(\frac{1}{R}\right)=0 
\end{equation}
By considering $y=x_0 $, assumption $(\mathcal{A}4)$ entails that the same regularity for $\kappa$ can be transferred to the exponent $p$, which therefore is continuous, with modulus of continuity fulfilling \eqref{modcont}. In the sequel we will frequently use this assumption. In particular this turns to be necessary in order to apply Theorem \ref{habermann10}.

\end{remark}

\section{Proof of Theorem \ref{teorema}}

Since all our results are local in nature, without loss of generality we shall suppose that
$$ \int_\Omega|Du|^{p(x)}dx < +\infty$$
Observe that since we have $|D\psi|\in W^{1,\gamma_2}(\Omega)$, due to Sobolev embedding theorem $|D\psi|\in L^{\gamma_2^*}(\Omega)$, that is 
$$ \int_\Omega|D\psi|^{\gamma_2^*}dx < +\infty\quad\quad \text{where}\quad\quad\gamma_2^*:=\frac{n\gamma_2}{n-\gamma_2}$$
We can define the finite quantity
\begin{equation} \label{finfin}
    M=\int_\Omega(|Du|^{p(x)}+|D\psi|^{\gamma_2^*})\,dx
\end{equation}

We start by applying the higher integrability result \eqref{magg-int}.
We would like to stress the fact that the higher integrability constants $ c_0, \delta $ are independent of the function $ F $ and also of the minimizer $ u $; they only depend on the growth constants and on the quantity $ M $ defined above. So, once the quantities $ \gamma_1, \gamma_2, L, \ell, \nu, M $ are fixed, the constants are determined independently of the function $ F $ and of the minimizer $ u $.
Of course $ \delta $ can be replaced at will by smaller constants.

	

Once obtained the higher integrability $ \delta $, we select $  R_0 \equiv R_0(\gamma_1 \, \gamma_2, L, \ell, \nu, M)>0 $ with the property that $ \omega (16R_0) <\delta/4 $, where $\omega :\mathbb{R}^+ \rightarrow \mathbb{R}^+$ is a nondecreasing continuous function, vanishing at zero, which represents the modulus of continuity of the exponent $ p(x)$ in view of remark \ref{modp}.\\
Finally we fix a ball $ B_{R_0} \subset\subset \Omega  $ and we set
$$ p_m:= \underset{\overline{{B_{R_0}}}}{\max}\,\,\,  p(x) $$
Then we consider balls $  B(x_c, 8R)\equiv B_{8R}\subset\subset B_{R_0/4} $ and we define 
$$p^+:=\underset{\overline{{B_{8R}}}}{\max}\,\,\,  p(x) \,\,\,\,\,\,\,
p^-:=\underset{\overline{{B_{8R}}}}{\min}\,\,\,  p(x)$$
Note that $ p^+ $ and $  p^- $ depend on the ball. Note also that, for a suitable $ x_0 \in  \overline{B_{8R}} $, not necessarly the center, we have $ p^+=p(x_0) $; also we have $ p^+-p^- \le \omega (16R) \le 16\omega (R) $.
Arguing as in \cite{AM01}, the preceding choices imply that
$$p^+(1+\delta/4)\le p(x)(1+\delta)\,\,\,\,\, \text{in } B_{8R}$$
$$p_m(1+\delta/4)\le p(x)(1+\delta) \,\,\,\,\,\text{in } B_{R_0}$$
Finally, without loss of generality, we can always choose $  16R \le R_0 \le 1 $.\\
For brevity, we state this Lemma which will be frequently used in the sequel:
 
 \begin{lemma}
 	\label{maggint}
 	Let $ u\in \mathcal{K}_\psi(\Omega) $ be a solution to the obstacle problem \eqref{obst-def} and the quantities $ p^+, R_0, \delta $ defined above, for any $\bar{R}<R_0$ we have
 	$$ Du\in L^{p^+}(B_{\bar{R}}) $$
 \end{lemma}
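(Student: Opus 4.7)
The plan is to reduce the statement to showing $|Du|^{p_m} \in L^1(B_{\bar R})$, since $p^+ \le p_m$ by definition, and then to exploit the two ingredients that have been deliberately engineered in the preceding paragraphs: the self-improving higher integrability estimate \eqref{magg-int} and the pointwise inequality $p_m(1+\delta/4) \le p(x)(1+\delta)$ valid on $B_{R_0}$.

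First, I would verify that \eqref{magg-int} can be applied on balls contained in $B_{R_0}$ with a finite right-hand side. The only non-obvious term is $\int |D\psi|^{p(x)(1+\delta)}$: using $p(x) \le \gamma_2$ together with $|D\psi| \in W^{1,\gamma_2}(\Omega) \hookrightarrow L^{\gamma_2^*}(\Omega)$, it suffices to take $\delta$ small enough that $\gamma_2(1+\delta) \le \gamma_2^*$. This is legitimate because the excerpt explicitly notes that $\delta$ may be replaced by any smaller positive constant (and $\gamma_2^*>\gamma_2$). Once this reduction is made, $|D\psi|^{p(x)(1+\delta)} \in L^1_{\loc}(\Omega)$, and combined with the a priori assumption $|Du|^{p(x)} \in L^1(\Omega)$ the inequality \eqref{magg-int} gives $|Du|^{p(x)(1+\delta)} \in L^1_{\loc}(B_{R_0})$. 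Iterating on a chain of nested balls, or equivalently applying a standard finite covering argument, propagates the integrability to every $B_{\bar R}$ with $\bar R < R_0$.

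Second, I would use the exponent comparison to conclude. On $B_{R_0}$ we have $p_m \le p_m(1+\delta/4) \le p(x)(1+\delta)$ pointwise, so splitting into the sets $\{|Du|<1\}$ and $\{|Du|\ge 1\}$ yields
$$|Du|^{p_m} \le 1 + |Du|^{p(x)(1+\delta)}\qquad\text{a.e. on } B_{R_0}.$$
Integrating over $B_{\bar R}$ and using $p^+ \le p_m$ gives the claimed $|Du|^{p^+} \in L^1(B_{\bar R})$.

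The only delicate point is transferring the self-improving estimate \eqref{magg-int}, which is naturally stated on the scale $B_R \to B_{R/2}$, up to a ball $B_{\bar R}$ with $\bar R$ arbitrarily close to $R_0$; but this is a routine covering argument and presents no substantive obstacle. The real content of the lemma is conceptual rather than technical: once $R_0$ has been chosen so that the oscillation of $p(\cdot)$ across $B_{R_0}$ is absorbed by the self-improving exponent $\delta$, constant-exponent integrability of $Du$ at the level $p_m$ (hence $p^+$) is essentially immediate.
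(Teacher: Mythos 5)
Your proposal is correct and takes essentially the same route as the paper: the key step in both is the pointwise exponent comparison $p^+ \le p_m \le p_m(1+\delta/4) \le p(x)(1+\delta)$ on $B_{R_0}$, combined with the local integrability of $|Du|^{p(x)(1+\delta)}$ coming from \eqref{magg-int}. The paper compresses this into the one-line chain $\int_{B_{\bar R}}|Du|^{p^+} \le \int_{B_{\bar R}}(1+|Du|)^{p^+} \le \int_{B_{\bar R}}(1+|Du|)^{p(x)(1+\delta)} < \infty$, whereas you split on $\{|Du|\ge1\}$ and spell out why the right-hand side of \eqref{magg-int} is finite; these are presentational differences only.
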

\begin{proof}
     $$ \int_{B_{\bar{R}} }|Du|^{p^+}\,dx \le \int_{B_{\bar{R}}}(1+|Du|)^{p^+}\,dx \le \int_{B_{\bar{R}}}(1+|Du|)^{p(x)(1+\delta)}\,dx < +\infty$$
\end{proof}
After these preliminary observations, let us start proving our higher differentiability result.\\
Let us consider $\varphi := u + t v$ for a suitable $v \in W^{1,p(x)}_0(\Omega)$ such that
	\begin{equation}
		\label{cond-v}
		u - \psi + t \, v \ge 0 \qquad \textnormal{for $t \in [0,1)$}.
	\end{equation}
	It is easy to see that such function $\varphi$ belongs to the obstacle class $\mathcal{K}_{\psi}(\Omega)$, because $\varphi = u + t v \ge \psi$.
	\\
	Let us fix a ball $B_{R}$ such that $B_{2R}\Subset \Omega$ with $16R<R_0$ and  a cut off function $\eta\in C^\infty_0(B_R)$, $\eta \equiv 1$ on $B_{\frac{R}{2}}$ such that  $|\nabla \eta|\le \frac{c}{R}$. From now on we will suppose $R\le 1$, without loss of generality due to the local nature of our results.
	Then, for  $|h|<\frac{R}{4}$, we consider
	\begin{equation}
		\label{fun-v}
		v_1(x) = \eta^2(x) [(u - \psi)(x + h) - (u - \psi)(x)].
	\end{equation}
	From the regularity of $u$ and $\psi$, it is immediate to check that $v_1 \in W^{1,p(x)}_0(\Omega)$. Moreover $v_1$ fulfills \eqref{cond-v}.
	\\Indeed, for a.e. $x \in \Omega$ and for any $t \in [0,1)$ we have 
	\begin{eqnarray*}
		&&u(x)- \psi(x)  + t v_1(x)\\
		&=& u(x) - \psi(x)  + t \eta^2(x) [(u - \psi)(x + h) - (u - \psi)(x)]\\
		&=& t \, \eta^2(x) (u - \psi)(x+h) + (1 - t \eta^2(x)) (u - \psi)(x) \ge \, 0,
	\end{eqnarray*}
	because $u \in \mathcal{K}_{\psi}(\Omega)$.
	
	By using $\varphi = u + t v_1$ in \eqref{obst-def} as an admissible test function, with $v_1$ introduced in \eqref{fun-v}, we obtain
	\begin{equation}
		\label{sei}
		0 \le \, \int_{\Omega} \langle \mathcal{A}(x, Du(x)), D[\eta^2(x) [(u - \psi)(x + h) - (u - \psi)(x)]] \rangle \, dx.
	\end{equation}
	On the other hand, if we define
	\begin{equation}
		\label{fun-v-tras}
		v_2(x) = \eta^2(x-h) [(u - \psi)(x-h) - (u - \psi)(x)]
	\end{equation}
	then  $v_2 \in W^{1,p(x)}_0(\Omega)$ and \eqref{cond-v} still is trivially satisfied, due to the fact that
	\begin{eqnarray*}
		&&u(x)- \psi(x)  + t v_2(x)\\
		&=& u(x) - \psi(x)  + t \eta^2(x-h) [(u - \psi)(x-h) - (u - \psi)(x)]\\
		&=& t \, \eta^2(x-h) (u - \psi)(x-h) + (1 - t \eta^2(x-h)) (u - \psi)(x) \ge \, 0.
	\end{eqnarray*}
	Choosing in \eqref{obst-def} as test function $\varphi = u + t v_2$, where $v_2$ is defined in \eqref{fun-v-tras}
	we get 
	$$ 0 \le \, \int_{\Omega} \langle \mathcal{A}(x, Du(x)), D[\eta^2(x-h) [(u - \psi)(x-h) - (u - \psi)(x)]] \rangle \, dx,$$
	By means of a simple change of variable, we obtain
	\begin{equation}
		\label{sette}
		0 \le \, \int_{\Omega} \langle \mathcal{A}(x+h, Du(x+h)), D[\eta^2(x) [(u - \psi)(x) - (u - \psi)(x+h)]] \rangle \, dx.
	\end{equation}
	We can add  \eqref{sei} and \eqref{sette}, thus  obtaining
	\begin{eqnarray*}
		0 &\le& \int_{\Omega} \langle \mathcal{A}(x, Du(x)), D[\eta^2(x) [(u - \psi)(x+h) - (u - \psi)(x)]] \rangle \, dx\\
		&& +  \int_{\Omega} \langle \mathcal{A}(x+h, Du(x+h)), D[\eta^2(x) [(u - \psi)(x) - (u - \psi)(x+h)]] \rangle \, dx\\
		&=& \int_{\Omega} \langle \mathcal{A}(x, Du(x)) - \mathcal{A}(x + h, Du(x+h)), D[\eta^2(x) [(u - \psi)(x+h) - (u - \psi)(x)]] \rangle \, dx,
	\end{eqnarray*}
	which implies, since $D[\eta^2(x) [(u - \psi)(x+h) - (u - \psi)(x)]]=\eta^2(x) D[(u - \psi)(x+h) - (u - \psi)(x)]+ 2 \, \eta(x) \,D \eta(x) \, [(u - \psi)(x+h) - (u - \psi)(x)]$
	\begin{eqnarray*}
		\!\!\!\! 0 &\ge& \int_{\Omega} \langle \mathcal{A}(x + h, Du(x+h)) - \mathcal{A}(x, Du(x)), \eta^2(x) D[(u - \psi)(x+h) - (u - \psi)(x)]\rangle \, dx\\
		&& \!\!\!\!\!\! + \int_{\Omega} \langle \mathcal{A}(x + h, Du(x+h)) - \mathcal{A}(x, Du(x)), 2 \, \eta(x) \, D \eta(x) \, [(u - \psi)(x+h) - (u - \psi)(x)]\rangle \, dx.
	\end{eqnarray*}
	Adding and subtracting the same quantity $\mathcal{A}(x+h, Du(x))$ and by the bilinearity of the scalar product, we can write the previous inequality as follows
	\begin{eqnarray}
		0 &\ge & 
		\int_{\Omega} \langle \mathcal{A}(x+h, Du(x+h)) - \mathcal{A}(x+h, Du(x)), \eta^2 (Du(x+h) - Du(x)) \rangle \, dx \nonumber\\
		&& - \int_{\Omega} \langle \mathcal{A}(x+h, Du(x+h)) - \mathcal{A}(x+h, Du(x)), \eta^2 (D\psi(x+h) - D\psi(x)) \rangle \, dx \nonumber\\
		&& + \int_{\Omega} \langle \mathcal{A}(x+h, Du(x+h)) - \mathcal{A}(x+h, Du(x)), 2 \eta \, D \eta \tau_{h}(u - \psi) \rangle \, dx \nonumber\\
		&& + \int_{\Omega} \langle \mathcal{A}(x+h, Du(x)) - \mathcal{A}(x, Du(x)), \eta^2 (Du(x+h) - Du(x)) \rangle \, dx \nonumber\\
		&& - \int_{\Omega} \langle \mathcal{A}(x+h, Du(x)) - \mathcal{A}(x, Du(x)), \eta^2 (D\psi(x+h) - D\psi(x)) \rangle \, dx \nonumber\\
		&& + \int_{\Omega} \langle \mathcal{A}(x+h, Du(x)) - \mathcal{A}(x, Du(x)), 2 \eta \, D \eta \tau_{h}(u - \psi) \rangle \, dx \nonumber\\
		&= :& I + II + III + IV + V + VI, \label{I-to-VI}\end{eqnarray}
	that leads to
	\begin{equation}\label{otto}
		I \le \, |II| + |III| + |IV| + |V| + |VI|.
	\end{equation}
	
	The $ p(x)$-ellipticity assumption expressed by $(\cA 1)$ implies
	\begin{equation}\label{I}
		I \ge \, \nu \int_{\Omega} \eta^2 |\tau_{h} Du|^2 (1 + |Du(x+h)|^2 + |Du(x)|^2)^{\frac{p(x)-2}{2}} \, dx.
	\end{equation}
	By virtue of assumption $(\cA 2)$ and Young's inequality with exponents $ p=p'=2 $, we get
	\begin{eqnarray}\label{II}
		|II| &\le& L \, \int_{\Omega} \eta^2 |\tau_{h} D u| (1 + |Du(x+h)|^2 + |Du(x)|^2)^{\frac{p(x)-2}{2}} \, |\tau_{h} D \psi| dx\cr\cr
		&\le& \varepsilon \, \int_{\Omega} \eta^2 |\tau_{h} D u|^2 (1 + |Du(x+h)|^2 + |Du(x)|^2)^{\frac{p(x)-2}{2}}  \, dx \cr\cr
		&& + C_{\varepsilon}(L) \int_{\Omega} \eta^2|\tau_{h} D \psi|^2 (1 + |Du(x+h)|^2 + |Du(x)|^2)^{\frac{p(x)-2}{2}}  \, dx \cr\cr
		&\le& \varepsilon \, \int_{\Omega} \eta^2 |\tau_{h} D u|^2 (1 + |Du(x+h)|^2 + |Du(x)|^2)^{\frac{p(x)-2}{2}}  \, dx\cr\cr
		&& + C_{\varepsilon}(L) \left (\int_{B_R} |\tau_{h} D \psi|^{p^+} \, dx \right )^{\frac{2}{p^+}} \, \left (\int_{B_{R}} (1 + |Du(x+h)|^2 + |Du(x)|^2)^{\frac{p(x)-2}{2}\cdot \frac{p^+}{p^+-2}}) \, dx \right )^{\frac{p^+-2}{p^+}}
	\end{eqnarray}
	where we used H\"older's inequality with exponents $\frac{p^+}{2}$ and $\frac{p^+}{p^+-2}$ and the properties of $\eta$.\\
	Here and in the sequel we denote with $\varepsilon>0$ a constant that will be determined later.
	\\ Observing that, since $p(x)<p^+$
	\[
	p^-> 2\,\Longrightarrow\,\,	\frac{p^+}{p^+ - 2} \le \frac{p(x)}{p(x)-2} \,
	\]
	we get
	\begin{eqnarray}\label{IIp}
		&&\int_{B_{R}} (1 + |Du(x+h)|^2 + |Du(x)|^2)^{\frac{p(x)-2}{2}\cdot \frac{p^+}{p^+-2}} \, dx 
		\cr \cr 
		&\le& \int_{B_{R}} (1 + |Du(x+h)|^2 + |Du(x)|^2)^{\frac{p(x)}{2}} \, dx
		\cr \cr 
		&\le& 
		\int_{B_{R}} (1 + |Du(x+h)|^2 + |Du(x)|^2)^{\frac{p^+}{2}} \, dx
		\cr \cr 
		&\le&
		C \int_{B_{R}} (1 + |Du(x+h)|^{p^+} + |Du(x)|^{p^+}) \, dx
		\cr \cr
		& \le&
		C \int_{B_{R}} (1 +|Du(x+h)|^{p^{+}} + |Du(x)|^{p^+}) \, dx
		\cr \cr
		&\le& C \int_{B_{2R}} (1 +|Du(x)|^{p^{+}}) \, dx,
	\end{eqnarray}
	 where we used the second estimate of Lemma \ref{le1}.\\
	 Note that the quantity above is finite due to Lemma \ref{maggint}, applied with $\bar{R}=2R$.\\
	 Inserting \eqref{IIp} in \eqref{II}, using  the assumption $D\psi\in W^{1,\gamma_2}$  and the first estimate of Lemma \ref{le1} in the second integral of the right hand side of \eqref{II}, we obtain
	\begin{eqnarray}\label{IIb}
		|II|
		&\le& \varepsilon \, \int_{\Omega} \eta^2 |\tau_{h} D u|^2 (1 + |Du(x+h)|^2 + |Du(x)|^2)^{\frac{p(x)-2}{2}}  \, dx\cr\cr
		&& + C_{\varepsilon}(L, n, p^{+}) |h|^2\left (\int_{B_{2R}} | D^2 \psi|^{p^+} \, dx \right )^{\frac{2}{p^+}} \, \left (\int_{B_{2R}} (1 + |Du(x)|^{p^+}) \, dx \right )^{\frac{p^+-2}{p^+}}\cr\cr
		&:=&\varepsilon \, \int_{\Omega} \eta^2 |\tau_{h} D u|^2 (1 + |Du(x+h)|^2 + |Du(x)|^2)^{\frac{p(x)-2}{2}}  \, dx+|h|^2 H_2,
	\end{eqnarray}
	where, from now on with $ H_i$, $i=2,3,4,5,6 $ we will denote quantities that are finite in view of our assumptions.
	Arguing analogously, once more by assumption $(\mathcal{A}2)$ we get
	\begin{eqnarray*}
		|III| &\le& 2 L  \int_{\Omega} |\tau_{h} Du| \, (1 + |Du(x+h)|^2 + |Du(x)|^2)^{\frac{p(x)-2}{2}} \, \eta|D \eta| |\tau_{h}(u - \psi)| \, dx\cr\cr
		&\le& \varepsilon \, \int_{\Omega} \eta^2 |\tau_{h} Du|^2 \, (1 + |Du(x+h)|^2 + |Du(x)|^2)^{\frac{p(x)-2}{2}} \, dx \cr\cr
		&& + C_{\varepsilon}(L) \, \int_{\Omega} |D \eta|^2|\tau_{h} (u - \psi)|^2 \, (1 + |Du(x+h)|^2 + |Du(x)|^2)^{\frac{p(x)-2}{2}} dx \cr\cr
		&\le& \varepsilon \, \int_{\Omega} \eta^2 |\tau_{h} Du|^2 \, (1 + |Du(x+h)|^2 + |Du(x)|^2)^{\frac{p(x)-2}{2}} \, dx \cr\cr
		&& + \frac{C_{\varepsilon}(L,p^+)}{R^2} \left (\int_{ B_R }|\tau_{h} (u - \psi)|^{p^+} \, dx \right )^{\frac{2}{p^+}} \left (\int_{B_{2R}} (1 + |Du(x)|^{p^+}) \, dx \right )^{\frac{p^+-2}{p^+}} ,	
	\end{eqnarray*}
	where in the last line we used the same argument for $|II|$ replacing $D\psi$ with $u - \psi$ .\\ Since $u-\psi\in W^{1,p^+}(B_R)$, we may
	use the first estimate of Lemma \ref{le1} to control the last integral in the right hand side of the previous estimate, obtaining that
	\begin{eqnarray}\label{III}
		\!\!\!\!\!\!\!\!|III| &\le &	\varepsilon \, \int_{\Omega} \eta^2 |\tau_{h} Du|^2 \, (1 + |Du(x+h)|^2 + |Du(x)|^2)^{\frac{p(x)-2}{2}} \, dx \cr\cr
		&& + |h|^2\frac{C_{\varepsilon}(L, n, p^+)  }{R^2} \left (\int_{B_{2R} }|D (u - \psi)(x)|^{p^+} \, dx \right )^{\frac{2}{p^+}} \left (\int_{B_{2R}}  (1 + |Du(x)|)^{p^+} \, dx \right )^{\frac{p^+-2}{p^+}}\cr\cr
		&:=&\varepsilon \, \int_{\Omega} \eta^2 |\tau_{h} D u|^2 (1 + |Du(x+h)|^2 + |Du(x)|^2)^{\frac{p(x)-2}{2}}  \, dx+|h|^2 H_3. 
	\end{eqnarray}
	In order to estimate the integral $IV$, we use assumption $(\mathrm{\cA 4}) $, Young's inequality with exponents $p=p'=2$ and H\"older's inequality with exponents $\frac{n}{2}$ and $\frac{n}{n-2}$ and the fact that $$\log(e+|Du(x)|^2)\le\log((e+|Du(x)|)^2) = 2\log(e+|Du(x)|)$$
	\begin{eqnarray}\label{IV}
		|IV| &\le& |h| \int_{\Omega} \eta^2\, |\tau_{h} Du| (\kappa(x+h)+\kappa(x)) (1 + |Du(x)|^2)^{\frac{p(x)-1}{2}}  \,\log(e+|Du(x)|^2)  dx\cr\cr
		&=& |h| \int_{\Omega} \eta^2\, |\tau_{h} Du| (\kappa(x+h)+\kappa(x)) (1 + |Du(x)|^2)^{\frac{p(x)-2}{4}} (1 + |Du(x)|^2)^{\frac{p(x)}{4}} \,\log(e+|Du(x)|^2)  dx\cr\cr
		&\le& \varepsilon \int_{\Omega} \eta^2|\tau_{h} Du|^2 \,  \left (1 + |Du(x)|^2 + |Du(x+h)|^2  \right )^{\frac{p(x)-2}{2}} \, dx\cr\cr
		&&  + C_{\varepsilon} |h|^{2} \int_{{B_R}} (\kappa(x+h)+\kappa(x))^2 (1 + |Du(x)|^2)^{\frac{p(x)}{2}} \,\,\log^2(e+|Du(x)|) dx \cr\cr
		&\le& \varepsilon \int_{\Omega} \eta^2 |\tau_{h} Du|^2 \,  \left (1 + |Du(x)|^2 + |Du(x+h)|^2  \right )^{\frac{p(x)-2}{2}} \, dx \cr\cr
		&& + C_{\varepsilon} |h|^{2} \left (\int_{B_R} (\kappa(x+h)+\kappa(x))^n \,\log^n(e+|Du(x)|) \, dx \right )^{\frac{2}{n}} \left ( \int_{{B_R}} (1 + |Du(x)|^2)^{\frac{np(x)}{2(n - 2)}} \, dx \right )^{\frac{n - 2}{n}}
	\end{eqnarray}
	where we also used that $\mathrm{supp}\eta\subset B_R$. \\ \\
		Let us consider now the term
	\begin{eqnarray} \label{klog}
	&&\int_{B_R} (\kappa(x+h)+\kappa(x))^n\log^n(e+|Du(x)|) \, dx \cr\cr
	&\le& C\int_{B_{R} }(\kappa^n(x+h)+\kappa^n(x))\log^n(e+|Du(x)|) \, dx \cr\cr
	&=& C\int_{B_{R} }\kappa^n(x)\log^n(e+|Du(x)|) \, dx + C\int_{B_{R} }\kappa^n(x+h)\log^n(e+|Du(x)|) \, dx.
	\end{eqnarray}
	Now we use Lemma \ref{dislog} for the first integral in the right hand side of \eqref{klog} with
    $$\varepsilon=1 \quad \gamma=1 \quad \alpha=n \quad s=\kappa^n(x) \quad t=\log^n(e+|Du(x)|) $$ 
    then we get
    \begin{eqnarray} \label{kh1}
    	&&\int_{B_{R} }\kappa^n(x)\log^n(e+|Du(x)|) \, dx \cr\cr &\le& \int_{B_{R} }\kappa^n(x)\log^n(e+k^n(x)) \, dx + \int_{B_{R} }(e+|Du(x)|-1)\log^n(e+|Du(x)|) \, dx \cr\cr 
    	&\le& C\int_{B_{R} }\kappa^n(x)\log^n(e+k(x)) \, dx + \int_{B_{R} }(e+|Du(x)|-1)\log^n(e+|Du(x)|) \, dx.
    \end{eqnarray}
    Again we use Lemma \ref{dislog} for the second integral in the right hand side of \eqref{klog}  with $$\varepsilon=1 \quad \gamma=1 \quad \alpha=n \quad s=\kappa^n(x+h) \quad t=\log^n(e+|Du(x)|) $$ 
    then we get
    \begin{eqnarray} \label{kh2}
    	&&\int_{B_{R} }\kappa^n(x+h)\log^n(e+|Du(x)|) \, dx \cr\cr &\le& \int_{B_{R} }\kappa^n(x+h)\log^n(e+k^n(x+h)) \, dx + \int_{B_{R} }(e+|Du(x)|-1)\log^n(e+|Du(x)|) \, dx \cr\cr 
    	&\le& C\int_{B_{R} }\kappa^n(x+h)\log^n(e+k(x+h)) \, dx + \int_{B_{R} }(e+|Du(x)|-1)\log^n(e+|Du(x)|) \, dx
    	\cr\cr 
    	&\le& C\int_{B_{2R} }\kappa^n(x)\log^n(e+k(x)) \, dx + \int_{B_{R} }(e+|Du(x)|-1)\log^n(e+|Du(x)|) \, dx.
    \end{eqnarray}
    Inserting \eqref{kh1} and \eqref{kh2} in \eqref{klog} we have 
    \begin{eqnarray} \label{log41}
    	&&\int_{B_R} (\kappa(x+h)+\kappa(x))^n\log^n(e+|Du(x)|) \, dx \cr\cr
    	&\le&  C\int_{B_{2R} }\kappa^n(x)\log^n(e+k(x)) \, dx + C\int_{B_{R} }(e+|Du(x)|-1)\log^n(e+|Du(x)|) \, dx
    \end{eqnarray}
    
    where the first integral in the right hand side is finite since $\kappa \in L^n\log^n L$ by assumption $(\mathcal{A}4)$.\\
    For the second integral in the right hand side of \eqref{log41} we have 
    \begin{eqnarray} \label{log42}
    	&& \int_{B_R} (e-1 + |Du(x)|)\log^n(e+|Du(x)|)\,dx \cr \cr
    	&\le& C\int_{B_R \cap \{|Du| \ge \, e\}} |Du(x)| \log^n (e + |Du(x)|) \, dx + C \, R^n\cr\cr
    	&\le& C\int_{B_R \cap \{|Du| \ge \, e\}} |Du(x)| \log^n (e+\frac{|Du(x)|}{\| |Du| \|_1} \cdot \| |Du| \|_1) \, dx + C \, R^n\cr\cr
    	&=& C \, R^n \fint_{B_R} |Du(x)| \log^n \left (e + \frac{|Du(x)|}{\| |Du| \|_1} \right ) \, dx + C\, R^n \, \fint_{B_R} |Du(x)| \log^n (e + \| |Du| \|_1) \, dx + C \, R^n\cr\cr
    	&\le&  C \, R^n\left (\fint_{B_R} |Du(x)|^{2} \, dx\right )^{\frac{1}{2}} + C \, R^n {\fint_{B_R}} |Du(x)|\log^n (e + {\| |Du | \|_1}) \, dx + C \, R^n, 
    \end{eqnarray}
    where we used the elementary fact
    $$\log(e+ab)\le\log(e+a)+\log(e+b) \quad\quad \forall \, a,b>0$$
    and Lemma \ref{normDu}.
    Observe that the quantities in the right hand side are finite, since $ u\in W^{1,p(x)}$ with $p(x)>2$.\\

    Coming back to \eqref{IV}, we aim to estimate the term $$\left ( \int_{{B_R}} (1 + |Du(x)|^2)^{\frac{np(x)}{2(n - 2)}} \, dx \right )^{\frac{n - 2}{n}},$$
    this is the crucial point where the Calder\'on-Zygmund result is used. We observe that for instance in the paper \cite{Gav1} this problem has been overcome by means of the results in \cite{EPdN18} which provides the necessary higher integrability result so this term turns to be bounded.\\
	Since $D\psi\in W^{1,p^+}(\Omega)$, classical Sobolev embedding Theorem implies $D\psi\in L^{(p^+)^*}(\Omega)$. \\
	Observing that
	\[
	\gamma_2\ge 2\,\,\Longrightarrow\,\,\frac{np(x)}{n - 2} \le \frac{np^+}{n - p^+}=(p^+)^*  ,
	\] \\
    we have $|D\psi|^{p(x)}\in L^{\frac{n}{n-2}}_{\loc}(\Omega)$\\
	 Therefore, Theorem \ref{habermann10} with $q=\frac{n}{n-2} $ implies $|Du|^{p(x)}\in L^\frac{n}{n-2}(\Omega)$, in particular there holds:
	\begin{eqnarray}\label{highint}
		 \left[\fint_{B_R}|Du|^{\frac{n p(x)}{n-2}}\,dx\right]^{\frac{n-2}{n}}\le C K^\delta \fint_{B_{8R}}|Du|^{p(x)}\,dx + C K^\delta 
		 \left[\fint_{B_{8R}}|D\psi|^{\frac{np(x)}{n-2}}\,dx+1\right]^{\frac{n-2}{n}}
	\end{eqnarray}
where $c=c(n,\gamma_1,\gamma_2,\nu,L)$
and
$K:=\int_{B_{8R}}(|Du|^{p(x)}+|D\psi|^{p(x)(1+\delta)})\,dx+1 $\\
Where we choose $\varepsilon=\delta$ of the higher integrability since it is not restrictive to suppose $1+\delta<q=\frac{n}{n-2}$.\\
Observe that we can replace $K$ with $$M:=\int_{\Omega}(|Du|^{p(x)}+|D\psi|^{\gamma_2^*})\,dx<+\infty $$ since $p(x)(1+\delta)<(p^+)^*\le\gamma_2^*.$\\
We can write the previous estimate as follows: 
    \begin{eqnarray} \label{K4}
    	\left ( \int_{{B_R}} (1 + |Du(x)|^2)^{\frac{np(x)}{2(n - 2)}} \, dx \right )^{\frac{n - 2}{n}}\le \frac{C M^{(p^+)^*}}{R^2}\int_{B_{8R}}|Du|^{p(x)}dx+CM^{(p^+)^*}\left[\int_{B_{8R}}|D\psi|^{(p^+)^*}+1\,dx\right]^\frac{n-2}{n}
	\end{eqnarray}

Inserting \eqref{log41}, \eqref{log42} and \eqref{K4} in \eqref{IV} we get
\begin{eqnarray}\label{IVbis}
	 |IV|
	 &\le&\varepsilon \int_{\Omega} \eta^2 |\tau_{h} Du|^2 \,  \left (1 + |Du(x)|^2 + |Du(x+h)|^2  \right )^{\frac{p(x)-2}{2}} \, dx \cr\cr
	 &&+C_\varepsilon |h|^2 \left \{\left(\int_{B_{2R} }\kappa^n(x)\log^n(e+k(x)) \, dx\right)^{\frac{2}{n}}+ C \,R^2\left[  \left (\fint_{B_R} |Du|^2 \, dx\right )^{\frac{1}{2}} + {\fint_{B_R}} |Du|\log^n (e + {\| |Du | \|_1}) \, dx + 1  \right]^\frac{2}{n}\right\}\cr\cr
	 &&\cdot \left\{\frac{C M^{(p^+)^*}}{R^2}\int_{B_{8R}}|Du|^{p(x)}dx+CM^{(p^+)^*}\left[\int_{B_{8R}}|D\psi|^{(p^+)^*}+1\,dx\right]^\frac{n-2}{n}\right\}\cr\cr
	&:=&\varepsilon \int_{\Omega} \eta^2 |\tau_{h} Du|^2 \,  \left (1 + |Du(x)|^2 + |Du(x+h)|^2  \right )^{\frac{p(x)-2}{2}} \, dx +|h|^2 H_4 
\end{eqnarray}
	
	Assumption $(\mathrm{\cA 4}) $ also entails
	\begin{eqnarray}\label{V}
		|V| &\le& |h| \int_{\Omega} \eta^2 \, |\tau_{h} D\psi(x)|(\kappa(x+h)+\kappa(x)) (1 + |Du(x)|^2)^{\frac{p(x)-1}{2}} \,\log(e+|Du(x)|^2)  dx\cr\cr
		&\le& C|h| \left (\int_{B_R} |\tau_{h} D\psi|^{p^+} \, dx \right)^{\frac{1}{p^+}} \left (\int_{B_R}  (\kappa(x+h)+\kappa(x))^{\frac{p^+}{p^+-1}} \,(\log(e+|Du(x)|))^\frac{p^+}{p^+-1} (1 + |Du(x)|^2)^{\frac{p(x)-1}{2}\cdot \frac{p^+}{p^+-1}}  \, dx \right )^{\frac{p^+-1}{p^+}} \cr\cr
		&\le& C|h| \left (\int_{B_R} |\tau_{h} D\psi|^{p^+} \, dx \right)^{\frac{1}{p^+}} \left (\int_{B_R}  (\kappa(x+h)+\kappa(x))^{\frac{p^+}{p^+-1}}\,(\log(e+|Du(x)|))^\frac{p^+}{p^+-1} \, (1 + |Du(x)|^2)^{\frac{p(x)}{2}}  \, dx \right )^{\frac{p^+-1}{p^+}} \cr\cr
		&\le& C|h| \left (\int_{B_R} |\tau_{h} D\psi|^{p^+} \, dx \right)^{\frac{1}{p^+}} \left (\int_{B_R} (\kappa(x+h)+\kappa(x))^n\,\log^n(e+|Du(x)|) \, dx \right )^{\frac{1}{n}} \cr\cr
		&&\qquad\quad \cdot \left (\int_{B_R} (1 + |Du(x)|)^{\frac{p(x)n(p^+-1)}{n(p^+-1) - p^+}} \, dx \right )^{\frac{n(p^+-1)- p^+}{np^+}} 
	\end{eqnarray}
	where we used the properties of $\eta$ and H\"older's inequality with exponents $p^+$ and $\frac{p^+}{p^+-1}$, and again H\"older's inequality with exponents $\frac{n(p^+-1)}{p^+}$ and $\frac{n(p^+-1)}{n(p^+-1)-p^+}$. Note that these exponents are greater than $1$ since $p^+<n$ and $p^+>2.$ \\
	Observe that the second integral in the right hand side is the same as the one in the previous estimate and therefore it can be estimated with \eqref{log42}.\\
	On the other hand, since  $|D\psi|\in L^{(p^+)^*}(\Omega)$ and since
	$$ \frac{np(x)(p^+-1)}{n(p^+-1) - p^+}\le \frac{np^+}{n - p^+}=(p^+)^*\quad \Longleftrightarrow\quad p^+\ge 2$$
	we have $ |D\psi|^{p(x)}\in L^\frac{n(p^+-1)}{n(p^+-1) - p^+}_{\loc}(\Omega)$\\ 
	Therefore, Theorem \ref{habermann10} with exponent $q=\frac{n(p^+-1)}{n(p^+-1) - p^+}$ implies $|Du|^{p(x)}\in L^\frac{n(p^+-1)}{n(p^+-1) - p^+}(\Omega)$, in particular there holds:
	\begin{eqnarray}\label{highint}
		\left[\fint_{B_R}|Du(x)|^{\frac{n p(x)(p^+-1)}{n(p^+-1)-p^+}}\,dx\right]^{\frac{n(p^+-1)-p^+}{n(p^+-1)}}\le C K^\delta \fint_{B_{8R}}|Du(x)|^{p(x)}\,dx + C K^\delta 
		\left[\fint_{B_{8R}}|D\psi(x)|^{\frac{n p(x)(p^+-1)}{n(p^+-1)-p^+}}\,dx+1\right]^{\frac{n(p^+-1)-p^+}{n(p^+-1)}}
	\end{eqnarray}
	where $c=c(n,\gamma_1,\gamma_2,\nu,L)$
	and
	$K:=\int_{B_{8R}}(|Du(x)|^{p(x)}+|D\psi(x)|^{p(x)(1+\delta)})\,dx+1 $\\
	Arguing as in \eqref{K4} we can replace $K$ with $$M:=\int_{\Omega}(|Du(x)|^{p(x)}+|D\psi(x)|^{\gamma_2^*})\,dx<+\infty$$ since $p(x)(1+\delta)<(p^+)^*<\gamma_2^*$.\\
	We can write the previous estimate as follows: 
	\begin{eqnarray}
		&&\left[\int_{B_R}|Du(x)|^{\frac{n p(x)(p^+-1)}{n(p^+-1)-p^+}}\,dx\right]^{\frac{n(p^+-1)-p^+}{n(p^+-1)}}\cr\cr
		&\le& \frac{C M^{\frac{p^+}{n(p^+-1)-p^+}}}{R^\frac{p^+}{p^+-1}} \int_{B_{8R}}|Du(x)|^{p(x)}\,dx + C M^{\frac{p^+}{n(p^+-1)-p^+}}
		\left[\int_{B_{8R}}|D\psi(x)|^{(p^+)^*}\,dx+R^n\right]^{\frac{n(p^+-1)-p^+}{n(p^+-1)}}
	\end{eqnarray}
	Then we get 
	\begin{eqnarray} \label{K5}
		&&\left (\int_{B_R} (1 + |Du(x)|)^{\frac{p(x)n(p^+-1)}{n(p^+-1) - p^+}} \, dx \right )^{\frac{n(p^+-1)- p^+}{np^+}}\cr\cr
		&\le& 
		\frac{C M^{(p^+)^*}}{R} \left[\int_{B_{8R}}|Du(x)|^{p(x)}\,dx\right]^\frac{p^+-1}{p^+} + C M^{(p^+)^*}
		\left[\int_{B_{8R}}|D\psi(x)|^{(p^+)^*}\,dx+R^n\right]^{\frac{n(p^+-1)-p^+}{np^+}}
	\end{eqnarray}

	Finally, by virtue of the assumption $D\psi\in W^{1,p^+}(\Omega)$, we can use the first inequality of Lemma \ref{le1} to estimate   the first integral in the right hand side of \eqref{V}, and inserting \eqref{log41}, \eqref{log42} and \eqref{K5} in \eqref{V} we get\\
	\begin{eqnarray}\label{Vbis}
		|V|
		&\le& C|h|^2 \, \left (\int_{B_{2R}} |D^2\psi(x)|^{p^+} \, dx \right)^{\frac{1}{p^+}}\cr\cr
	    &&\cdot \left\{\left (\int_{B_{2R} }\kappa^n(x)\,\log^n(e+k(x)) \, dx \right )^{\frac{1}{n}} + C \,R\left[  \left (\fint_{B_R} |Du(x)|^2 \, dx\right )^{\frac{1}{2}} + {\fint_{B_R}} |Du(x)|\log^n (e + {\| |Du | \|_1}) \, dx + 1  \right]^\frac{1}{n}\right\} \cr\cr
	    &&\cdot \left\{\frac{C M^{(p^+)^*}}{R} \left[\int_{B_{8R}}|Du(x)|^{p(x)}\,dx\right]^\frac{p^+-1}{p^+} + C M^{(p^+)^*}
	    \left[\int_{B_{8R}}|D\psi(x)|^{(p^+)^*}\,dx+1\right]^{\frac{n(p^+-1)-p^+}{np^+}}\right\}\cr\cr
	    &:=&|h|^2H_5
	\end{eqnarray}
	with $C=C(n, \nu, L, \ell, p^+, R)$.
	\\
	Finally, arguing as we did for the estimate of $V$ we get
	\begin{eqnarray*}
		|VI| &\le& 2|h| \, \int_{\Omega} \eta |D\eta|\, |\tau_{h} (u - \psi)(x)| (\kappa(x+h)+\kappa(x)) (\mu^2 + |Du(x)|^2)^{\frac{p(x)-1}{2}} \,\log(e+|Du(x)|^2) dx\\
		&\le& \frac{C|h|}{R} \, \left (\int_{B_R} |\tau_{h} (u - \psi)(x)|^{p^+} \, \, dx \right)^{\frac{1}{p^+}} \left (\int_{B_R}  (\kappa(x+h)+\kappa(x))^{\frac{p^+}{p^+-1}}(\log(e+|Du(x)|))^\frac{p^+}{p^+-1} \, (1 + |Du(x)|^2)^{\frac{p(x)}{2}}  \, dx \right )^{\frac{p^+-1}{p^+}} \\
		&\le& \frac{C \, |h|}{R} \left (\int_{B_R} |\tau_{h} (u - \psi)(x)|^{p^+} \, dx \right)^{\frac{1}{p^+}} \cdot \left (\int_{B_R}  (\kappa(x+h) + \kappa(x))^n \,\log^n(e+|Du(x)|) dx \right )^{\frac{1}{n} } \cr\cr
		&&\qquad\quad \cdot \left (\int_{B_R} (1 + |Du(x)|)^{\frac{p(x)n(p^+-1)}{n(p^+-1) - p^+}} \, dx \right )^{\frac{n(p^+-1)- p^+}{np^+}} 		
	\end{eqnarray*}
	where we used the fact that $|D\eta|\le \frac{C}{R}$.
	Using  Lemma \ref{le1} and inserting \eqref{log41}, \eqref{log42} and \eqref{K5} in the previous estimate, we get
	\begin{eqnarray}\label{VI}
		|VI| &\le& C |h|^{2} \left (\int_{B_{2R}} |D (u - \psi)(x)|^{p^+} \, dx \right)^{\frac{1}{p^+}} \cr\cr	
		 &&\cdot \left\{\left (\int_{B_{2R} }\kappa^n(x)\,\log^n(e+k(x)) \, dx \right )^{\frac{1}{n}} + C \,R\left[  \left (\fint_{B_R} |Du(x)|^2 \, dx\right )^{\frac{1}{2}} + {\fint_{B_R}} |Du(x)|\log^n (e + {\| |Du | \|_1}) \, dx + 1  \right]^\frac{1}{n}\right\} \cr\cr
		&&\cdot \left\{\frac{C M^{(p^+)^*}}{R} \left[\int_{B_{8R}}|Du(x)|^{p(x)}\,dx\right]^\frac{p^+-1}{p^+} + C M^{(p^+)^*}
		\left[\int_{B_{8R}}|D\psi(x)|^{(p^+)^*}\,dx+1\right]^{\frac{n(p^+-1)-p^+}{np^+}}\right\}\cr\cr
		&:=&|h|^2H_6	
	\end{eqnarray}
	\\
	Inserting  estimates \eqref{I}, \eqref{IIb}, \eqref{III}, \eqref{IVbis}, \eqref{Vbis} and \eqref{VI} in \eqref{otto}, we infer the existence of  constants $C_\varepsilon \equiv C_\varepsilon(\varepsilon,\nu, L, \ell, n, \gamma_1,\gamma_2, R)$ and $C \equiv C(\nu, L, \ell, n, \gamma_1,\gamma_2, R)$ such that
	\begin{eqnarray*}
		&& \nu \int_{\Omega} \eta^2 |\tau_{h} Du(x)|^2 (1 + |Du(x+h)|^2 + |Du(x)|^2|)^{\frac{p(x)-2}{2}} \, dx\cr\cr
		&\le &\, 3 \varepsilon \int_{\Omega} \eta^2 |\tau_{h} Du(x)|^2 (1 + |Du(x+h)|^2 + |Du(x)|^2|)^{\frac{p(x)-2}{2}} \, dx \cr\cr
		&&+|h|^2(H_2+H_3+H_4+H_5+H_6)
	\end{eqnarray*}
	Choosing $\varepsilon = \frac{\nu}{6}$ we get
	\begin{eqnarray*}
		&& \nu \int_{\Omega} \eta^2 |\tau_{h} Du(x)|^2 (1 + |Du(x+h)|^2 + |Du(x)|^2|)^{\frac{p(x)-2}{2}} \, dx \le |h|^2H,
	\end{eqnarray*}
    where $H:=H_2+H_3+H_4+H_5+H_6$ is a finite quantity.\\
	Using Lemma \ref{Vi} in the left hand side of previous estimate and recalling that $\eta\equiv 1$ on $B_{\frac{R}{2}}$,  we get
	\begin{eqnarray} \label{stimafinale}
	    &&\nu \int_{\Omega} \eta^2 |\tau_{h} Du(x)|^2 (1 + |Du(x+h)|^2 + |Du(x)|^2|)^{\frac{p(x)-2}{2}} \, dx\cr\cr
	    &\ge&\nu \int_{\Omega} \eta^2 |\tau_{h} Du(x)|^2 (1 + |Du(x+h)|^2 + |Du(x)|^2|)^{\frac{\gamma_1-2}{2}} \, dx\cr\cr
	    &\ge& \nu \int_{\Omega} \eta ^2 |\tau_{h}V_{\gamma_1}( Du(x))|^2 \, dx\cr\cr
	    &\ge& \nu \int_{B_{\frac{R}{2}}} |\tau_{h}V_{\gamma_1}( Du(x))|^2 \, dx
    \end{eqnarray}
    Then we have $$\int_{B_{\frac{R}{2}}} |\tau_{h}V_{\gamma_1}( Du(x))|^2 \, dx\le H|h|^2$$
	Lemma \ref{lep} implies that $V_{\gamma_1}(Du)\in W^{1,2}(B_{\frac{R}{2}})$
	and the conclusion follows recalling  the definition of  $V_{\gamma_1}(\xi)$ in \eqref{Vi}.

\end{document}